\title{\bf Tracking and regret bounds for online zeroth-order Euclidean and Riemannian optimisation\footnote{This work is supported by the Australian Research
		Council (DP210102454) and the Australian Government, via grant
		AUSMURIB000001 associated with ONR MURI grant N00014-19-1-2571.}}
\author[1]{Alejandro I. Maass}
\author[1]{Chris Manzie}
\author[1]{Dragan Ne{\v{s}}i{\'c}}
\author[1]{Jonathan H. Manton}
\author[2]{Iman Shames}
\affil[1]{Department of Electrical and Electronic Engineering, The University of Melbourne}
\affil[2]{CIICADA Lab, College of Engineering \& Computer Science, The Australian National University}
\affil[*]{e-mails: \small \texttt{\{maassa,manziec,dnesic,jmanton\}@unimelb.edu.au;iman.shames@anu.edu.au}  }
\date{}
\newcommand{\espu}[1]{\mathbb{E}_{u_0}\left[#1\right]}
\newcommand{\espp}[1]{\mathbb{E}\left[#1\right]}
\newcommand{\espc}[2]{\mathbb{E}\left[\left. #1\, \right| \, #2\right]}
\newcommand{\qed}{\hfill \ensuremath{\blacksquare}}
\newcommand{\R}{\mathbb{R}}
\newcommand{\N}{\mathbb{N}}
\newcommand{\inner}[1]{\left\langle #1 \right\rangle}
\newcommand{\norm}[1]{\left\|#1\right\|}
\newcommand{\trace}[1]{\mbox{Tr}\left\{#1\right\}}
\newcommand{\non}{\nonumber}
\newcommand{\X}{\mathcal{X}}
\newcommand{\M}{\mathcal{M}}
\newcommand{\grad}{\mbox{\rm grad}}
\newcommand{\reg}{\mbox{\rm Reg}}
\newcommand{\Exp}{\mathrm{Exp}}
\newcommand{\ie}{i.e.~}
\newcommand{\eg}{e.g.~}
\newtheorem{assumption}{Assumption}
\newtheorem{proposition}{Proposition}
\newtheorem{definition}{Definition}
\newtheorem{theorem}{Theorem}
\newtheorem{corollary}{Corollary}
\newtheorem{lemma}{Lemma}
\newtheorem{remark}{Remark}
\providecommand{\proofname}{Proof}
\newenvironment{proof}%
{%
	\par\noindent{\bfseries\upshape \proofname\ }%
}%
{\qed}
\begin{document}
\maketitle

\begin{abstract}
We study numerical optimisation algorithms that use zeroth-order information to minimise time-varying geodesically-convex cost functions on Riemannian manifolds. In the Euclidean setting, zeroth-order algorithms have received a lot of attention in both the time-varying and time-invariant cases. However, the extension to Riemannian manifolds is much less developed. We focus on Hadamard manifolds, which are a special class of Riemannian manifolds with global nonpositive curvature that offer convenient grounds for the generalisation of convexity notions. Specifically, we derive bounds on the expected instantaneous tracking error, and we provide algorithm parameter values that minimise the algorithm's performance. Our results illustrate how the manifold geometry in terms of the sectional curvature affects these bounds. Additionally, we provide dynamic regret bounds for this online optimisation setting. To the best of our knowledge, these are the first regret bounds even for the Euclidean version of the problem.
Lastly, via numerical simulations, we demonstrate the applicability of our algorithm on an online Karcher mean problem.
\end{abstract}

\section{Introduction}\label{sec:intro}
Time-varying optimisation problems are popular in the machine learning community under the framework known as \emph{online convex optimisation} (OCO) \cite{hazan2019introduction}. OCO is a promising methodology for modelling sequential tasks, and the main goal is developing algorithms that can track trajectories of the optimisers of the time-varying optimisation problem (up to asymptotic error bounds). OCO can be regarded as an iterative game between a player and an adversary. At each iteration $k\in\N_0$, the player (algorithm) selects a decision $x_k$ from a convex set $\mathcal{K}$, and the adversary reveals a convex function $f_k:\mathcal{K}\rightarrow\mathbb{R}$. The player subsequently suffers an instantaneous loss $f_k(x_k)$. Particularly, in this time-varying context, it is often of interest to solve the following sequence of optimisation problems,
\begin{align}\label{eq:OCO}
	\min_{x\in\mathcal{K}\subset\R^n} f_k(x).
\end{align}
We note that $k$ captures the time-varying nature of the underlying problem in the sense that between iterations, the revealed/sampled cost function varies, which is not the case in standard convex optimisation literature, where $f_k=f$ for all $k\in\N_0$. This feature makes OCO algorithms an appealing candidate to tackle dynamic optimisation tasks across many engineering and science domains such as power systems, robotics, and transportation systems, which inherit time variability in the optimisation problem at hand \cite{simonetto2020time}. OCO in the Euclidean setting \eqref{eq:OCO} has been vastly studied in the literature, and we refer the reader to \cite{simonetto2020time} for a review of available algorithms and applications that fit this problem structure.

Recently, optimisation over Riemannian manifolds has received a lot of attention given its applications to machine learning \cite{zhang2016riemannian}, signal processing \cite{manton2002optimization}, dictionary learning \cite{sun2016complete}, and low-rank matrix completion \cite{vandereycken2013low}, among others. However, time-varying optimisation problems in the Riemannian setting are much less developed. A relevant work in this context is \cite{feppon2019extrinsic}, where the authors provide a unified view for continuous matrix algorithms with the aim of tracking the value of some algebraic map. They show that each map can be geometrically interpreted as a projection from an ambient Euclidean space of matrices onto a matrix submanifold. These type of algorithms have applications in  data assimilation, data processing, machine learning, and matrix completion. On the other hand, numerical continuation methods solve questions that are related to the online optimisation setting. Recently, \cite{seguin2021continuation} studied numerical continuation methods in the context of Riemannian optimisation, where the authors develop and analyse a path-following numerical continuation algorithm on manifolds for solving the resulting parameter-dependent equation. The methods are then illustrated in two classical applications of Riemannian optimisation: the computation of the Karcher mean and low-matrix completion. 
Lastly, \cite{sato2019riemannian} studies online optimal identification of linear continuous systems. The identification problem is formulated as an optimisation problem on a Riemannian manifold, and the systems matrices to be identified vary over time.

In this paper, we aim to further expand on the above works and thus develop results in the area of time-varying Riemannian optimisation. Particularly, we focus on extending the OCO setting \eqref{eq:OCO} to the Riemannian case. A Riemannian manifold that provides proper grounds for the generalisation of convexity notions from the Euclidean setting to non-linear spaces is the Hadamard manifold \cite{bacak2014convex}. Technically speaking, Hadamard manifolds are Riemannian manifolds that are complete, simply connected, and with global nonpositive curvature \cite{bishop1969manifolds}---we define these concepts formally in Section \ref{sec:preliminaries}. Some classical examples include hyperbolic spaces, manifolds of positive definite matrices, and $\R^n$. Hadamard manifolds have applications in Brownian motion \cite{grigor2009volume}, Bayesian interference \cite{said2021bayesian}, online system identification \cite{sato2019riemannian}, diffusion tensor imaging \cite{arsigny2007geometric}, computational geometry \cite{onishi2002voronoi,nielsen2010hyperbolic}, medical imaging \cite{pennec2006riemannian}, and computer vision \cite{tuzel2008pedestrian,dong2014target}. In fact, many of these applications have inherent time-varying behaviour. For instance, in online system identification \cite{sato2019riemannian}, the cost functions are naturally time-varying since they use online data from a dynamical system. Moreover, in imaging for computer vision \cite{tuzel2008pedestrian}, robust principal component analysis (PCA)  is used to reduce outliers and noise \cite{bouwmans2018applications}. In this context, the measurement matrices are constructed from a window of video frames, and sliding the window over time yields to a time-varying optimisation problem. Lastly, as we illustrate in our example further below, in medical imaging applications \cite{fletcher2007riemannian} is often of interest to compute the Karcher mean or Riemannian centre of mass, which leads to a time-varying optimisation problem over Hadamard manifolds if the matrix measurements come from a moving object.

Consequently, studying time-varying optimisation problems over Hadamard manifolds is relevant and it is the main topic of interest in this paper. Particularly, the above applications lead to optimisation problems that can be generally formulated as follows,
\begin{align}\label{eq:problem}
	\min_{x\in\mathcal{X}\subset\M} f_k(x),
\end{align}
where $\M$ is a Hadamard submanifold embedded in $\R^n$, $\mathcal{X}$ is a closed subset of a geodesically convex set of $\mathcal{M}$, and each $f_k:\mathcal{M}\to\R$, $k\in\N_0$, is geodesically $L$-smooth and geodesically strongly convex (see Definitions \ref{def:smoothness} and \ref{def:strong-convexity} further below). Note that \eqref{eq:problem} is the extension of the Euclidean setting \eqref{eq:OCO} to the Riemannian case.

It is not uncommon that, in the aforementioned applications, explicit expressions for the cost functions $f_k$ may not be available, or their gradients are too costly to compute. For instance, in online system identification \cite{sato2019riemannian}, $f_k$ is a complicated function with respect to $x$ that depends on the output of a dynamical system, and it is evaluated through a real-world	experiment. Moreover, in  robust space tracking \cite{dixit2019online}, when the problem is large-scale, evaluating the full gradient at every iteration can be excessively costly since the cost function in such problems is often expressible as a sum of several component functions, each depending only on a subset of measurements. 
Therefore, we consider a gradient-free setting where only function evaluations can be obtained via an oracle. Each cost function is thus seen as a black-box with time-varying input-output map. 
The zeroth-order setting has been vastly studied in the Euclidean literature since it can be found in many applications where derivatives are either unavailable, or too expensive to compute, see \eg \cite{mania2018simple,malik2019derivative} for machine learning, \cite{ira2020tuning} for online controller tuning, \cite{chen2017zoo} for deep neural networks, and \cite{chen2018bandit} for mobile fog computing.

Formally, we aim to generate solutions to \eqref{eq:problem} using random gradient-free iterates of the form
\begin{align}\label{eq:iterate}
	x_{k+1} = \mathcal{P}_{\mathcal{X}}\left[\Exp_{x_k}(-\alpha_k g_{\eta,k^+}(x_k,u_k))\right],
\end{align}
where $\mathcal{P}_{\mathcal{X}}$ denotes the projection that maps a point $x\in\M$ to $\mathcal{P}_{\mathcal{X}}(x)\in\mathcal{X}\subset\M$ such that $\mathrm{dist}(x,\mathcal{P}_{\mathcal{X}}(x))<\mathrm{dist}(x,y)$, for all $y\in\mathcal{X} \backslash \{\mathcal{P}_{\mathcal{X}}(x)\}$, and $\mathcal{P}_{\X}(x)=x$ for $x\in\X$. The positive constant $\alpha_k$ denotes the step size, $g_{\eta,k^+}$ is the oracle or \emph{gradient estimator}, and $\Exp_{x_k}(\cdot)$ denotes the exponential mapping which we formally define in Section \ref{sec:preliminaries} below. We consider an extension of the recently proposed zeroth-order oracle for optimisation over Riemannian manifolds in \cite{li2020stochastic} to make it suitable for our time-varying setting \eqref{eq:problem}.  Particularly, we define the oracle as
\begin{align}\label{eq:oracle}
	g_{\eta,k^+}(x,u) \coloneqq \frac{f_{k^+}(\Exp_x(\eta u)) - f_k(x)}{\eta}u,
\end{align}
where $k^+\coloneqq k+1/2$, $k\in\N_0$ and $\eta>0$ corresponds to the oracle's precision. Some important discussions are in place behind the subscript $k^+$ and also the way $u$ is constructed. We note that the oracle is essentially a two-point estimate of the directional derivative, and the subscript $k^+$ captures the time-varying nature of the underlying process. That is, our framework permits the cost function to change between the two evaluations. For example, consider an estimation problem where there is a continuously varying dynamical system, so from the moment we sample the cost function at $x$ to the moment we evaluate it at $\Exp_x(\eta u)$, the cost function would have already changed by the underlying process, and this change is represented by $k^+$. For the forthcoming analysis, let us define $\mathcal{F}\coloneqq \{f_k:\mathcal{M}\to\mathbb{R}|k\in \N_0\cup \{j+1/2|j\in\N_0\}\}$.

Now, with respect to how $u$ is constructed, we follow the same approach as \cite{li2020stochastic}. That is, we let $u = Pu_0\in T_x\M$, where $u_0$ is normally distributed as per $u_0\sim\mathcal{N}(0,I_n)\in\R^n$, and $P\in\R^{n\times n}$ is the orthogonal projection matrix onto the tangent space $T_x\M$ of $\M$ at $x$.

\begin{remark}\label{rem:submanifold}
	Note that we assume that the cost functions $f_k$ are defined on submanifolds embedded in the Euclidean space, which makes it efficient to sample $u$ from the associated tangent space. As also discussed in \cite{li2020stochastic}, we emphasise that the above gradient estimation methodology could be generally applied to other manifolds; however, efficiently sample Gaussian random objects on the tangent space of general manifolds is not an easy task \cite{hsu2002stochastic}. Future work will focus on relaxing the way we sample the random vector $u$ by using results such as the ones in \cite{said2017riemannian}.
\end{remark}

\subsection{More related work}
In the general field of optimisation over Riemannian manifolds, and for the time-invariant---or \emph{offline}---setting, several results have been proposed in the literature, and we briefly review them  below. For instance, \cite{boumal2019global} provided convergence rates for deterministic Riemannian gradient descent and smooth cost functions. Stochastic algorithms were also considered for smooth Riemannian optimisation in \cite{bonnabel2013stochastic,zhang2016riemannian,kasai2018riemannian,zhou2019faster,weber2019projection}. Particularly, \cite{bonnabel2013stochastic} extended the classical stochastic gradient descent algorithms to the Riemannian case, and provided convergence results. The authors in \cite{zhang2016riemannian} introduced the variance--reduced \textsc{RSVRG} method and considered Riemannian optimisation of finite sums of geodesically smooth functions. The work \cite{kasai2018riemannian} proposed a Riemannian stochastic recursive gradient algorithm (\textsc{RSRG}) which provides notable computational advantages in comparison to \textsc{RSVRG}. The work \cite{zhou2019faster} introduced the Riemannian \textsc{SPIDER} method for non-convex Riemannian optimisation as a simple and efficient extension of the Euclidean \textsc{SPIDER} counterpart. Lastly, \cite{weber2019projection} studied stochastic projection-free methods for constrained optimisation of smooth functions on Riemannian manifolds. The stochastic Riemannian Frank-Wolfe methods for nonconvex and geodesically convex problems are introduced. For non-smooth cost functions, Riemannian subgradient methods have been proposed in \cite{li2019weakly}, manifold \textsc{ADMM} methods in \cite{kovnatsky2016madmm}, manifold proximal gradient (\textsc{ManPG}) methods in \cite{chen2020proximal}, manifold proximal point algorithms (\textsc{ManPPA}) in \cite{chen2019manifold}, and stochastic \textsc{ManPG} in \cite{wang2020riemannian}.

None of the aforementioned works have considered the zeroth-order setting, in which the oracle makes available only cost function values as opposed to first-order or second-order information. To the best of our knowledge, zeroth-order Riemannian optimisation for time-invariant cost functions have been considered in \cite{chattopadhyay2015derivative,dreisigmeyer2018direct,fong2019stochastic,li2020stochastic}. 
Particularly, \cite{fong2019stochastic} presented the extended Riemannian stochastic derivative-free optimisation (\textsc{RSDFO}) algorithm, and proved it converges in finitely many steps in compact connected Riemannian manifolds. The authors in \cite{chattopadhyay2015derivative} extended the derivative-free optimisation method by \cite{powell1964efficient} to Riemannian manifolds, but did not provide any complexity or convergence results. Just recently, \cite{li2020stochastic} provided the first complexity results for both deterministic and stochastic zeroth-order Riemannian optimisation. Their zeroth-order methods rely on an estimator of the Riemannian gradient based on a modification of the Gaussian smoothing technique from the seminal work by Nesterov \cite{nesspo17}. In \cite{li2020stochastic}, the authors illustrated that the proposed zeroth-order method has a comparable performance to its first-order counterparts in many applications of interest such as matrix approximation, k-PCA, sparse PCA, and the Karcher mean problem. 
Lastly, Dreisigmeyer in \cite{dreisigmeyer2018direct} studies direct search methods over general reductive homogenous spaces using maps from the tangent space to the manifold. We emphasise that these direct search methods use multiple mesh points per iteration, which is different to this paper, since we only evaluate the cost function twice at each step and take one action only.

We note that the existing works on zeroth-order Riemannian optimisation listed above do not consider the online setting \eqref{eq:problem} in which the cost-function is allowed to be time-varying. This problem, however, has been widely studied for $\M\equiv\R^n$, see e.g. \cite{bubeck2012regret,chiang2013beating,besbes2015non,yang2016tracking,dixit2019online}, in which it is assumed that cost function evaluations are carried out simultaneously when computing two-point estimates of the gradient. Later on, \cite{shames2019online} relaxed this assumption and allowed the cost function to change between function evaluations, which added an extra modelling layer that better respects the time-varying nature of the problem.

\subsection{Contributions}
Our contributions are threefold: 
\begin{itemize}
	\item We extend the OCO framework presented in \cite{shames2019online} from the Euclidean setting to the case where the cost function is defined on a Hadamard manifold. Our proposed algorithm uses an extension of the zeroth-order oracle recently presented in \cite{li2020stochastic} that allows for the function to be time-varying.	
	\item We provide asymptotic bounds on the expected instantaneous tracking error, which to the best of our knowledge, are the first error bounds for online zeroth-order optimisation on Riemannian manifolds available in the literature. Our results illustrate how the manifold geometry---in terms of the sectional curvature---influences the performance of the algorithm. In addition, we provide explicit choices for the algorithm parameters---step size and oracle's precision---that  minimise the performance of the algorithm. 
	\item Lastly, we provide dynamic regret bounds for our time-varying setting. These are the first available dynamic regret bounds for \eqref{eq:problem}, which were not previously available even for the Euclidean case.
\end{itemize}

\section{Preliminaries}\label{sec:preliminaries}
In this section, we present a brief introduction on the basics of manifold optimization. For a more in-depth revision we refer the reader to \cite{absil2009optimization}.
A smooth manifold is a topological manifold $\M$ with a globally defined differentiable structure. At any point $x$ on a smooth manifold, tangent vectors are defined as the tangents of parametrised curves passing through $x$. The tangent space $T_x\M$ of a manifold $\M$ at $x$ is defined as the set of all tangent vectors at the point $x$. Tangent vectors on manifolds generalise the notion of a directional derivative. Formally, we can define the tangent space $T_x\M$ as follows
\begin{align*}
	T_x\M \coloneqq \left\{\gamma'(0): \gamma(0)=x,\ \gamma([-\varepsilon,\varepsilon])\subset \M\ \text{for some}\ \varepsilon >0,\ \gamma\ \text{is differentiable} \right\}. 
\end{align*}
The tangent bundle of a differentiable manifold $\M$ is the manifold $T\M$ that assembles all the tangent vectors in $\M$. As a set, it is the disjoint union of all the tangent planes, \ie $T\M \coloneqq \sqcup_{x\in\M} T_x\M$.
The dimension of a manifold $\M$, denoted	as $d$, is the dimension of the Euclidean space that the manifold is locally homeomorphic to. In particular, the dimension of the tangent space is always equal to the dimension of the manifold.

A Riemannian manifold is a couple $(\M,g)$, where $\M$ is a smooth manifold equipped with a smoothly varying inner product (Riemannian metric) on the tangent space at every point, \ie $g(\cdot,\cdot)\coloneqq \inner{\cdot,\cdot}_x:T_x\M\times T_x\M\rightarrow \R$.	Without loss of generality, when the Riemannian
metric is clear from the context, we simply talk
about ``the Riemannian manifold $\M$''. Throughout, we assume the Levi-Civita connection is associated with $(\M,g)$.

\begin{definition}[Riemannian gradient]\label{def:gradient}
	Suppose $f$ is a smooth function on the Riemannian manifold $\M$. The Riemannian gradient $\grad f(x)$ is defined as the unique element of $T_x\M$ satisfying $\inner{\xi,\grad f(x)}_x = \left. \frac{\mathrm{d}}{\mathrm{d}t}f(\gamma(t))\right|_{t=0}$
	for any $\xi\in T_x\M$, where $\gamma(t)$ is a curve in $\M$ such that $\gamma(0)=x$ and $\gamma'(0)=\xi$.
\end{definition}

A Riemannian submanifold $\M$ of a Riemanninan manifold $\mathcal{N}$ is a submanifold of $\mathcal{N}$ equipped with the Riemannian metric inherited from $\mathcal{N}$. Since in this paper we assume that $\M$ is a Riemannian submanifold embedded in $\R^n$, $\M$ is equipped with the Riemannian metric inherited from $\R^n$. We thus write the inner product on the tangent space $\inner{\cdot,\cdot}_x$ at every point $x\in\M$ as $\inner{\cdot,\cdot}_x=\inner{\cdot,\cdot}$, with the right-hand side being the Euclidean inner-product. Consequently, the Riemannian gradient in Definition \ref{def:gradient} becomes  the projection of its Euclidean gradient onto the tangent space, that is $\grad f(x) = \mathcal{P}_{T_x\M}(\nabla f(x))$, where $\nabla f(x)$ denotes the Euclidean gradient of $f$ at $x$.

We now introduce a family of local parametrisations often called \emph{retractions}. In a nutshell, retractions allow us to move on a manifold (\ie move in the direction of a tangent vector) while staying on the manifold. 
Formally, we say that a retraction mapping $R_x$ is a smooth mapping from $T_x\M$ to $\M$ such that $R_x(0)=x$, and the differential at 0 is an identity mapping, \ie $\left.\frac{\mathrm{d}}{\mathrm{d}t}R_x(t\xi)\right|_{t=0}=\xi$, for all $\xi\in T_x\M$. We refer to the latter as the local rigidity condition. 
In other words, for every tangent vector $\xi\in T_x\M$, the curve $\gamma_\xi:t\mapsto R_x(t\xi)$ satisfies $\dot{\gamma}_\xi(0)=\xi$. Moving along the curve $\gamma_\xi$ is thought of as moving in the direction $\xi$ while constrained to the manifold $\M$.
In particular, the exponential mapping $\mathrm{Exp}_x$ is a retraction that generates geodesics. A geodesic is a curve representing in some sense the shortest path between two points in a Riemannian manifold. 

Throughout this paper, we assume that $\M$ is a Hadamard manifold, as introduced in Section \ref{sec:intro}, i.e.  complete, simply connected, and has nonpositive sectional curvature everywhere. Complete refers to the domain of the exponential mapping being the whole tangent bundle $T\M$, and simply connected means there are no circular paths that cannot be shrunk to a point. Hadamard manifolds have strong properties. For instance, there exists a unique geodesic between any two points on $\M$, and the exponential map is globally invertible at any point, $\Exp^{-1}_x:\M\rightarrow T_x\M$. The geodesic distance is thus given by $\mathrm{dist}(x,y)=\norm{\Exp^{-1}_x(y)}=\norm{\Exp^{-1}_y(x)}$, where $\norm{\cdot}$ is the norm associated with the Riemannian metric, which in our setting corresponds to the Euclidean norm as discussed above. On a Hadamard manifold, the notion of parallel transport provides a way to transport a vector along a geodesic. It is defined as the operator $\Gamma_x^y: T_x\M \rightarrow T_y\M$ which maps $v\in T_x\M$ to $\Gamma_x^y(v)\in T_y\M$ while preserving the inner product, \ie $\inner{u,v}_x = \inner{\Gamma_x^y (u), \Gamma_x^y (v)}_y$.

We introduce some important definitions.

\begin{definition}[Geodesically $L$-smoothness]\label{def:smoothness}
	A differentiable function $f\in\mathcal{F}$ is said to be geodesically $L$-smooth if there exists $L\geq 0$ such that the following inequality holds for all $x,y\in\M$,
	\begin{align}\label{eq:L-smoothness-gradient}
		\norm{\grad f(x) - \Gamma_y^x (\grad f(y))} \leq L \mathrm{dist}(x,y),
	\end{align}
	where we recall that $\mathrm{dist}(x,y)$ denotes the geodesic distance between $x$ and $y$, and $\Gamma_y^x$ is the parallel transport from $y$ to $x$.	
\end{definition}
It can be shown that if $f$ is geodesically $L$-smooth, then for any $x,y\in\M$ we have \cite{zhang2016first,zhou2019faster}
\begin{align}\label{eq:L-smoothness}
	f(y) \leq f(x) + \inner{\grad f(x),\Exp_x^{-1}(y)}_x + \frac{L}{2}\mathrm{dist}(x,y)^2.
\end{align}

We now provide convexity definitions on Hadamard manifolds.  We emphasize that these definitions would be more subtle for general Riemannian manifolds, which can have several geodesics between two points, see e.g. \cite{udriste2013convex}.
\begin{definition}[Geodesically convex set]
	A set $\mathcal{X}\subset \M$ is said to be geodesically convex if for any $x,y\in\X$, the unique shortest length geodesic connecting $x$ to $y$ lies entirely in $\X$.
\end{definition}

\begin{definition}
	A function $f:\X\to\R$ is said to be geodesically convex if for any $x,y\in\X$, 
	$f(\gamma(t))\leq (1-t)f(x) + tf(y)$, $\forall t\in[0,1]$, where $\gamma$ is the unique geodesic connecting $x$ to $y$.
\end{definition}

\begin{definition}[Geodesic strong convexity]\label{def:strong-convexity}
	A function $f:\X\to\R$ is said to be geodesically $\sigma$-strongly convex if there exists $\sigma\in(0,L]$ such that for any $x,y\in\X$,
	\begin{align*}
		f(y) \geq f(x) + \inner{\grad f(x),\mathrm{Exp}_x^{-1}(y)}_x + \frac{\sigma}{2} \mathrm{dist}(x,y)^2.
	\end{align*}
\end{definition}

The random object $u$ in \eqref{eq:oracle} is computed as in \cite{li2020stochastic}, i.e. $u=Pu_0$, where $u_0\sim\mathcal{N}(0,I_n)$ and $P$ is the orthogonal projection matrix onto the tangent space. Therefore, as in \cite{li2020stochastic}, we define
\begin{align*}
	\espu{g}\coloneqq \frac{1}{\nu}\int_{\R^n} g e^{-\frac{1}{2}\norm{u_0}^2du_0}	,
\end{align*}
where $\nu$ is the normalising constant, see Section 2.2 and Lemma 2.1 in \cite{li2020stochastic} for more details. Sometimes we will omit the subscript $u_0$ for simplicity.

\section{Online optimisation using zeroth-order Riemannian oracles}

Before presenting our results we state the underlying assumptions.

\begin{assumption}\label{assu:standing}
	\hspace{0cm}
	\begin{enumerate}
		\item[(a)] Every $f\in\mathcal{F}$ in \eqref{eq:problem} is geodesically $L$-smooth and geodesically strongly convex as per Definitions \ref{def:smoothness} and \ref{def:strong-convexity}, respectively.		
		\item[(b)] $\exists V\geq 0$ such that $\mathrm{dist}(x_{k+1}^\star,x_{k^+}^\star)\leq V$,		where $x_k^\star \coloneqq \arg\min_{x\in\M} f_k(x)$ for all $k\in\N_0$.
		\item[(c)] $\exists \delta>0$ such that $|f_k(x) - f_{k^+}(x)|\leq \delta$ for all $x\in\M$ and $k\in\N_0$.
		\item[(d)] The sectional curvature of $\M$ is lower-bounded by $\kappa\leq 0$.
		\item[(e)] $\exists R>0$ such that $\max_{y,z\in\mathcal{X}} \mathrm{dist}(y,z) \leq R$. 
	\end{enumerate}
\end{assumption}
The above assumptions are standard in the context of online convex optimisation, and have been vastly used in the literature when $\M\equiv \R^n$, see \eg \cite{shames2019online,dixit2019online,zhang2018dynamic,mokhtari2016online}.
Particularly, (a) generalises assumptions such as Lipschitz gradient smoothness and strong convexity (to ensure a unique minimiser) often adopted in convex optimisation over $\R^n$. Item (b) bounds the change in minimiser, also called \emph{path variation} in the literature \cite{yang2016tracking}, and (c) bounds the variation between consecutive cost functions. 
These assumptions on bounded variations essentially avoid two consecutive problems being arbitrarily different. For most tracking applications these assumptions are naturally satisfied, see e.g. \cite{dixit2019online,derenick2009optimal}. For instance, in the context of robust subspace tracking, they ensure that the underlying target subspace does not change abruptly \cite{vaswani2018robust}.
Note that  $\delta\to 0$ corresponds to the special case where the cost-function does not vary between evaluations and thus $g_{\eta,k^+}(x,u)=g_{\eta,k}(x,u)$ in \eqref{eq:oracle}. Lastly, (d) assumes a lower bound in the curvature of $\M$, and (e) an upper bound in the diameter of $\X$, which are common assumptions in Riemannian optimisation see \eg \cite{bonnabel2013stochastic,zhang2016first}, and \cite{li2020stochastic}.

In online convex optimisation, there are usually two measures of interest to assess performance of the algorithms: the \emph{tracking error} \cite{dixit2019online,shames2019online} and \emph{regret} \cite{hazan2019introduction}. Henceforth, we study both measures and thus provide performance guarantees for the proposed algorithm \eqref{eq:iterate} by means of upper bounds on both the tracking error and regret.
The tracking error essentially captures how well the algorithm \eqref{eq:iterate} can follow (or \emph{track}) the optimums of the time-varying optimisation problem \eqref{eq:problem} as $k$ grows. On the other hand, regret (or in this case \emph{dynamic regret} \cite{mokhtari2016online}), captures the accumulation of losses over the time horizon.

\subsection{Tracking error bounds}
The main objective of this section is to provide performance guarantees for algorithm \eqref{eq:iterate} by means of upper bounds on the tracking error. That is, characterise how well \eqref{eq:iterate} can track the optimisers of the time-varying problem \eqref{eq:problem}.
To that end, we define the \emph{tracking error} as
\begin{align*}
	e_k \coloneqq \mathrm{dist}(x_k,x_{k^+}^\star) = \norm{\Exp_{x_k}^{-1}(x_{k^+}^\star)} = \norm{\Exp_{x_{k^+}^\star}^{-1}(x_k)},
\end{align*}
and the \emph{estimation error} as
\begin{align*}
	\bar{e}_k \coloneqq \mathrm{dist}(x_{k+1},x_{k^+}^\star) = \norm{\Exp_{x_{k+1}}^{-1}(x_{k^+}^\star)} = \norm{\Exp_{x_{k^+}^\star}^{-1}(x_{k+1})}.
\end{align*}
The objects $e_k$ and $\bar{e}_k$ are also known as \emph{pre-update optimality gap} and \emph{post-update optimality gap}, respectively, see e.g. \cite{dixit2019online}.

To state our main tracking result, we require some intermediate steps which we present in the following.
First, a common technique for analysing optimisation algorithms is to write the estimation error in terms of the tracking error by using the law of cosines in the Euclidean space. Unfortunately, this equality does not exist for general non-linear spaces, and in fact, there are no corresponding analytical expressions. However, in \cite{zhang2016first}, a trigonometric distance bound for Alexandrov spaces with curvature bounded below was proposed. Alexandrov spaces are length spaces with curvature bound and form a generalisation of Riemannian manifolds with sectional curvature bounded below. The result uses the properties of \emph{geodesic triangles}, and it can be used as an analogue to the law of cosines given its fundamental nature. For our setting, this inequality is formalised in the lemma below.

\begin{lemma}\label{lem:triangle}
	For any Riemannian manifold $\M$ with a sectional curvature lower bounded by $\kappa$, and any points $x_{k^+}^\star,x_k\in\M$, the update \sloppy $x_{k+1} = \mathcal{P}_{\mathcal{X}}\left[\Exp_{x_k}(-\alpha_k g_{\eta,k^+}(x_k,u_k))\right]$ satisfies
	\begin{align}\label{eq:cosine}
		\bar{e}_k^2 \leq e_k^2 + 2\alpha_k \inner{g_{\eta,k^+}(x_k,u_k),\Exp^{-1}_{x_k}(x_{k^+}^\star)} + \zeta(\kappa,e_k) \alpha_k^2 \norm{g_{\eta,k^+}(x_k,u_k)}^2,
	\end{align} 
	where $\zeta(\kappa,e_k)\coloneqq e_k\sqrt{|\kappa|}/\tanh(e_k\sqrt{|\kappa|})$.
\end{lemma}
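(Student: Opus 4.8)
The plan is to combine two ingredients: the non-expansiveness of the projection $\mathcal{P}_{\mathcal{X}}$ with respect to the geodesic distance, and the trigonometric distance bound for spaces with sectional curvature bounded below from \cite{zhang2016first}. First I would let $y_{k+1} \coloneqq \Exp_{x_k}(-\alpha_k g_{\eta,k^+}(x_k,u_k))$ be the point \emph{before} projection, so that $x_{k+1} = \mathcal{P}_{\mathcal{X}}(y_{k+1})$. Since $x_{k^+}^\star \in \mathcal{X}$ (it is the minimiser over $\M$, and I would invoke the standing assumption that it lies in the relevant geodesically convex set) and the projection is the nearest-point map onto $\mathcal{X}$, we have $\mathrm{dist}(x_{k+1}, x_{k^+}^\star) = \mathrm{dist}(\mathcal{P}_{\mathcal{X}}(y_{k+1}), x_{k^+}^\star) \leq \mathrm{dist}(y_{k+1}, x_{k^+}^\star)$. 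This is the standard fact that metric projection onto a (geodesically) convex subset of a Hadamard manifold is $1$-Lipschitz; on a Hadamard manifold it follows from the convexity of the squared distance function along geodesics. Hence $\bar e_k^2 \leq \mathrm{dist}(y_{k+1}, x_{k^+}^\star)^2$, and it suffices to bound the right-hand side.

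Next I would apply the geodesic-triangle comparison inequality to the triangle with vertices $x_k$, $y_{k+1}$, and $x_{k^+}^\star$. The side from $x_k$ to $y_{k+1}$ is the geodesic $t \mapsto \Exp_{x_k}(-t\alpha_k g_{\eta,k^+}(x_k,u_k))$, so its length is $\alpha_k \norm{g_{\eta,k^+}(x_k,u_k)}$, and the side from $x_k$ to $x_{k^+}^\star$ has length $e_k$ with $\Exp_{x_k}^{-1}(x_{k^+}^\star)$ the corresponding initial tangent vector. Lemma 5 of \cite{zhang2016first} states precisely that in a manifold with sectional curvature $\geq \kappa$, for a geodesic triangle with these data,
\begin{align*}
	\mathrm{dist}(y_{k+1}, x_{k^+}^\star)^2 \leq e_k^2 + \zeta(\kappa, e_k)\, \alpha_k^2 \norm{g_{\eta,k^+}(x_k,u_k)}^2 - 2\alpha_k \inner{-g_{\eta,k^+}(x_k,u_k), \Exp_{x_k}^{-1}(x_{k^+}^\star)},
\end{align*}
where the inner product term comes from the "angle" at vertex $x_k$ and $\zeta(\kappa,a) = a\sqrt{|\kappa|}/\tanh(a\sqrt{|\kappa|})$ is the curvature-dependent factor multiplying the squared side opposite that angle. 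Rewriting $-2\alpha_k\inner{-g_{\eta,k^+}(x_k,u_k),\Exp_{x_k}^{-1}(x_{k^+}^\star)} = 2\alpha_k\inner{g_{\eta,k^+}(x_k,u_k),\Exp_{x_k}^{-1}(x_{k^+}^\star)}$ and chaining with the projection bound gives exactly \eqref{eq:cosine}.

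The main obstacle, and the point requiring the most care, is checking that the hypotheses of the comparison lemma are genuinely met here. In particular: (i) that $x_{k^+}^\star$ and $y_{k+1}$ lie in a region where the geodesic triangle with vertex $x_k$ is well-defined and the comparison inequality applies — on a Hadamard manifold this is automatic since geodesics between any two points exist and are unique, so no injectivity-radius restriction is needed, and only the curvature \emph{lower} bound $\kappa$ is used; and (ii) that the projection step does not spoil the direction of the inequality — it does not, precisely because we bound $\bar e_k^2$ by the pre-projection quantity before invoking the triangle bound, so the projection is used only through its non-expansiveness. A secondary subtlety is the sign and orientation of the inner product term: one must be careful that $\Exp_{x_k}^{-1}(x_{k^+}^\star)$ (pointing from $x_k$ toward the optimum) pairs with $-\alpha_k g_{\eta,k^+}(x_k,u_k)$ (the update direction), which is what produces the $+2\alpha_k\inner{g_{\eta,k^+}(x_k,u_k),\Exp_{x_k}^{-1}(x_{k^+}^\star)}$ term with the stated sign. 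Everything else is bookkeeping.
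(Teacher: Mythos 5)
Your proposal is correct and follows essentially the same route as the paper's proof: define the pre-projection point, apply the trigonometric comparison bound of \cite{zhang2016first} (Lemma~\ref{lem:zhang}) to the geodesic triangle with vertices $x_k$, $\tilde{x}_{k+1}$, $x_{k^+}^\star$, identify $-2bc\cos(A)$ with $2\alpha_k\inner{g_{\eta,k^+},\Exp_{x_k}^{-1}(x_{k^+}^\star)}$, and chain with the nonexpansiveness of $\mathcal{P}_{\mathcal{X}}$ (Lemma~\ref{lem:nonexpansive}). The only difference is cosmetic (you invoke nonexpansiveness before the triangle bound, the paper after), and you rightly flag the implicit hypothesis $x_{k^+}^\star\in\mathcal{X}$ needed for $\mathcal{P}_{\mathcal{X}}(x_{k^+}^\star)=x_{k^+}^\star$, which the paper uses without comment.
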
	
\begin{proof}
	See Appendix \ref{sec:apendix-triangle}.
\end{proof} 

Note that for the Euclidean setting, we set the curvature $\kappa=0$ in Lemma \ref{lem:triangle}, which leads to $\zeta=1$.

The next intermediate step is to derive a relation between the conditional expectations of $e_{k+1}$ and $\bar{e}_k$. By Assumption \ref{assu:standing}(b) and the triangle inequality for the geodesic distance $\mathrm{dist}(\cdot,\cdot)$, we can write
\begin{align}
	e_{k+1}&= \mathrm{dist}(x_{k+1},x_{k^++1}^\star) \non  \\
	&\leq \mathrm{dist}(x_{k+1},x_{k^+}^\star) + \mathrm{dist}(x_{k^+}^\star,x_{k^++1}^\star) \non \\
	&\leq \bar{e}_k + 2V \non \\
	\espc{e_{k+1}}{x_k} &\leq \espc{\bar{e}_k}{x_k} + 2V. \label{eq:2V}
\end{align}

The last required intermediate step is the proposition below, where we present bounds related to the zeroth-order oracle that are essential to show our optimiser tracking result.
\begin{proposition}\label{propo:preliminaries}
	Under Assumption \ref{assu:standing}(a),(c), the following holds.
	\begin{enumerate}
		\item[(a)] $\displaystyle \norm{\espu{g_{\eta,k^+}(x,u)} - \grad f_{k^+}(x)} \leq \frac{L\eta}{2}(d+3)^{3/2} + \frac{\delta}{\eta} d^{1/2}$.
		\item[(b)] $\displaystyle \espu{\norm{g_{\eta,k^+}(x,u)}^2} \leq \frac{L^2\eta^2}{2} (d+6)^3 + 2L\delta (d+4)^2  + \frac{2\delta^2}{\eta^2}d +  2 (d+4)\norm{\grad f_{k^+}(x)}^2$.
	\end{enumerate}
\end{proposition}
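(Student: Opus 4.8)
The plan is to reduce both bounds to the time-invariant Riemannian zeroth-order estimator of \cite{li2020stochastic} (itself a manifold adaptation of Nesterov--Spokoiny Gaussian smoothing) and then absorb the extra error caused by the cost changing between the two oracle queries. Fix $x$, write $\Delta\coloneqq f_{k^+}(x)-f_k(x)$ so that $|\Delta|\le\delta$ by Assumption \ref{assu:standing}(c), and split
\[
g_{\eta,k^+}(x,u)=\tilde g_{k^+}(x,u)+\frac{\Delta}{\eta}u,\qquad \tilde g_{k^+}(x,u)\coloneqq\frac{f_{k^+}(\Exp_x(\eta u))-f_{k^+}(x)}{\eta}u .
\]
Since $\tilde g_{k^+}$ is precisely the single-function estimator of \cite{li2020stochastic} applied to $f_{k^+}$, I would first invoke (or re-derive) the two bounds $\norm{\espu{\tilde g_{k^+}(x,u)}-\grad f_{k^+}(x)}\le\frac{L\eta}{2}(d+3)^{3/2}$ and $\espu{\norm{\tilde g_{k^+}(x,u)}^2}\le\frac{L^2\eta^2}{2}(d+6)^3+2(d+4)\norm{\grad f_{k^+}(x)}^2$. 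These rest on three ingredients that I would state explicitly: (i) the quadratic remainder estimate $\bigl|f_{k^+}(\Exp_x(\eta u))-f_{k^+}(x)-\eta\inner{\grad f_{k^+}(x),u}\bigr|\le\frac{L\eta^2}{2}\norm{u}^2$, which comes from geodesic $L$-smoothness (Definition \ref{def:smoothness}) together with the Hadamard identities $\Exp_x^{-1}(\Exp_x(\eta u))=\eta u$ and $\mathrm{dist}(x,\Exp_x(\eta u))=\eta\norm{u}$; (ii) the projected-Gaussian identity $\espu{\inner{v,u}u}=v$ for $v\in T_x\M$ (because $\espu{u_0u_0^\top}=I_n$ and $P^2=P$ with range $T_x\M$) and the parity identity $\espu{\inner{v,u}\norm{u}^2}=0$; and (iii) the Gaussian moment bounds $\espu{\norm{u}^p}\le(d+p)^{p/2}$ for $p\ge2$ and $\espu{\inner{v,u}^2\norm{u}^2}\le(d+4)\norm{v}^2$, all with effective dimension $d$ since $u=Pu_0$ is supported on the $d$-dimensional subspace $T_x\M$.

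For part (a), I would bound via the triangle inequality applied to the split: the $\tilde g_{k^+}$ term contributes at most $\frac{L\eta}{2}(d+3)^{3/2}$, while the perturbation contributes $\norm{\espu{\tfrac{\Delta}{\eta}u}}\le\frac{|\Delta|}{\eta}\espu{\norm{u}}\le\frac{\delta}{\eta}\sqrt{\espu{\norm{u}^2}}=\frac{\delta}{\eta}\sqrt d$ by Jensen's inequality. For part (b), I would expand the square and take expectations,
\[
\espu{\norm{g_{\eta,k^+}(x,u)}^2}=\espu{\norm{\tilde g_{k^+}(x,u)}^2}+\frac{2\Delta}{\eta}\espu{\inner{\tilde g_{k^+}(x,u),u}}+\frac{\Delta^2}{\eta^2}\espu{\norm{u}^2}.
\]
The first term is bounded by the clean second-moment estimate above. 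For the middle term, substituting the remainder estimate into $\espu{\inner{\tilde g_{k^+}(x,u),u}}=\espu{\tfrac{f_{k^+}(\Exp_x(\eta u))-f_{k^+}(x)}{\eta}\norm{u}^2}$, the $\inner{\grad f_{k^+}(x),u}\norm{u}^2$ contribution vanishes by the parity identity, leaving a remainder contribution of magnitude at most $\frac{L\eta}{2}\espu{\norm{u}^4}\le\frac{L\eta}{2}(d+4)^2$, so the middle term is at most $\delta L(d+4)^2\le 2\delta L(d+4)^2$ in magnitude. The last term equals $\frac{\Delta^2}{\eta^2}d\le\frac{2\delta^2}{\eta^2}d$. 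Summing the three pieces yields (b).

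The main obstacle is not any single step but the careful bookkeeping of Gaussian moments on the $d$-dimensional tangent space --- in particular establishing the two clean-estimator bounds, which require the Nesterov--Spokoiny moment machinery transplanted to the manifold together with the Hadamard identities for $\Exp$ and $\mathrm{dist}$. The genuinely new part, namely propagating the $\delta$-dependence through both the bias and the second moment, is short once the cross term in (b) is routed through the parity identity; doing so is what keeps $\espu{\norm{\tilde g_{k^+}(x,u)}^2}$ at coefficient one and prevents a spurious $\delta\norm{\grad f_{k^+}(x)}$ term from contaminating the second-moment bound.
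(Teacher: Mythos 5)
Your proof is correct and follows a genuinely different route than the paper. The paper never splits off the perturbation $\Delta\coloneqq f_{k^+}(x)-f_k(x)$ from the estimator: for (a) it represents the bias as a single projected-Gaussian integral via Lemma \ref{lem:identities}(a), moves the norm inside the integral, bounds the integrand pointwise by $\tfrac{L\eta^2}{2}\|u\|^2+\delta$, and then applies the moment bounds; for (b) it squares the whole scalar $f_{k^+}(\Exp_x(\eta u))-f_k(x)$ and uses $(a+b)^2\le 2a^2+2b^2$ to split the remainder-plus-$\delta$ piece from the linear term, without any parity argument. Your decomposition $g_{\eta,k^+}=\tilde g_{k^+}+\tfrac{\Delta}{\eta}u$ reduces the problem to the time-invariant bounds of \cite{li2020stochastic} plus a perturbation, and routing the cross-term in (b) through the parity identity $\espu{\inner{v,u}\|u\|^2}=0$ (which the paper's Lemma \ref{lem:identities} does not even list) is an extra ingredient the paper never needs. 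Your route buys strictly sharper constants: in (b) you obtain $L\delta(d+4)^2+\tfrac{\delta^2}{\eta^2}d$ in place of the paper's $2L\delta(d+4)^2+\tfrac{2\delta^2}{\eta^2}d$, and in (a) you could have been tighter still---since $\Delta$ is deterministic and $\espu{u}=0$, the perturbation contributes exactly $\bigl\|\tfrac{\Delta}{\eta}\espu{u}\bigr\|=0$ to the bias, not merely the $\tfrac{\delta}{\eta}\sqrt d$ you quote; the paper's $\tfrac{\delta}{\eta}d^{1/2}$ term is an artifact of moving the norm inside the integral before separating the constant. Both approaches establish the claimed bounds; yours trades the paper's single-integral bookkeeping for a cleaner additive decomposition and an extra parity lemma, and as a bonus reveals that the change-of-cost term does not actually bias the gradient estimator.
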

\begin{proof} 
	See Appendix \ref{sec:apendix-B}.
\end{proof} 

Proposition \ref{propo:preliminaries} is the extension of Proposition 2.1 by \cite{li2020stochastic} to the time-varying case. As also noted in \cite{li2020stochastic} for the \emph{offline} case, we see that the oracle in \eqref{eq:oracle} is a biased estimator of the gradient in the online case, and the difference between them can be bounded as in Proposition \ref{propo:preliminaries}. Moreover, note that our bounds have extra terms (w.r.t. \cite{li2020stochastic}) that depend on $\delta$, which is the upper bound on the cost function variation by means of Assumption \ref{assu:standing}(c). We emphasise that Proposition \ref{propo:preliminaries} recovers the oracle bounds by \cite{li2020stochastic} for $\delta\to 0$ (time-invariant case).

We are now in a position to state a general result that illustrates how the conditional expectation of the tracking error evolves in time for any given step size $\alpha_k$. This is the main tool required to show that algorithm \eqref{eq:iterate} can track the optimisers of \eqref{eq:problem}.
\begin{theorem}\label{theo:main}
	Consider the iterates $x_{k+1} = \mathcal{P}_{\mathcal{X}}\left[\Exp_{x_k}(-\alpha_k g_{\eta,k^+}(x_k,u_k))\right]$ with $\alpha_k>0$ and $g_{\eta,k^+}$ as per \eqref{eq:oracle}. Then, under Assumption \ref{assu:standing}(a)--(d) we have that, for all $k\in\N_0$,
	\begin{align}\label{eq:theo1}
		\espp{e_{k+1} | x_k} \leq \sqrt{ \psi(e_k) } + 2V,
	\end{align}
	where 
	\begin{multline*}
		\psi(e_k) \coloneqq \left( 2(d+4)L^2\zeta(\kappa,e_k)\alpha_k^2 - \sigma \alpha_k + 1\right) e_k^2 \\
		+ \alpha_k \left(L\eta(d+3)^{3/2} + \frac{2\delta}{\eta} d^{1/2}\right) e_k \\
		+ \left( \frac{L^2\eta^2}{2} (d+6)^3 + 2L\delta (d+4)^2 + \frac{2\delta^2}{\eta^2} d \right) \zeta(\kappa,e_k)\alpha_k^2.
	\end{multline*}
\end{theorem}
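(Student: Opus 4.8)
The plan is to start from the trigonometric distance bound of Lemma~\ref{lem:triangle}, take conditional expectations given $x_k$, and then substitute the oracle bounds of Proposition~\ref{propo:preliminaries}. Concretely, applying $\espc{\cdot}{x_k}$ to \eqref{eq:cosine} and noting that $e_k$ is deterministic given $x_k$ (so $\zeta(\kappa,e_k)$ pulls out of the expectation), I would obtain
\begin{align*}
	\espc{\bar e_k^2}{x_k} \leq e_k^2 + 2\alpha_k \inner{\espu{g_{\eta,k^+}(x_k,u_k)},\Exp^{-1}_{x_k}(x_{k^+}^\star)} + \zeta(\kappa,e_k)\alpha_k^2 \espu{\norm{g_{\eta,k^+}(x_k,u_k)}^2}.
\end{align*}
For the cross term I would add and subtract $\grad f_{k^+}(x_k)$: writing $\inner{\espu{g},\Exp^{-1}_{x_k}(x_{k^+}^\star)} = \inner{\grad f_{k^+}(x_k),\Exp^{-1}_{x_k}(x_{k^+}^\star)} + \inner{\espu{g} - \grad f_{k^+}(x_k),\Exp^{-1}_{x_k}(x_{k^+}^\star)}$. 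The first piece is controlled by geodesic $\sigma$-strong convexity (Definition~\ref{def:strong-convexity}) applied with $x=x_k$, $y=x_{k^+}^\star$: since $x_{k^+}^\star$ is the minimiser of $f_{k^+}$, we get $\inner{\grad f_{k^+}(x_k),\Exp_{x_k}^{-1}(x_{k^+}^\star)} \leq f_{k^+}(x_{k^+}^\star) - f_{k^+}(x_k) - \tfrac{\sigma}{2}e_k^2 \leq -\tfrac{\sigma}{2}e_k^2$. The second piece is bounded by Cauchy--Schwarz and Proposition~\ref{propo:preliminaries}(a): its magnitude is at most $\big(\tfrac{L\eta}{2}(d+3)^{3/2} + \tfrac{\delta}{\eta}d^{1/2}\big)\,\norm{\Exp_{x_k}^{-1}(x_{k^+}^\star)} = \big(\tfrac{L\eta}{2}(d+3)^{3/2} + \tfrac{\delta}{\eta}d^{1/2}\big) e_k$, which after multiplying by $2\alpha_k$ yields exactly the middle line of $\psi(e_k)$.

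For the quadratic term I would apply Proposition~\ref{propo:preliminaries}(b) and then bound $\norm{\grad f_{k^+}(x_k)}^2$. Here I use geodesic $L$-smoothness: since $\grad f_{k^+}(x_{k^+}^\star)=0$, the definition \eqref{eq:L-smoothness-gradient} with $x=x_k$, $y=x_{k^+}^\star$ gives $\norm{\grad f_{k^+}(x_k)} = \norm{\grad f_{k^+}(x_k) - \Gamma_{x_{k^+}^\star}^{x_k}(\grad f_{k^+}(x_{k^+}^\star))} \leq L\,\mathrm{dist}(x_k,x_{k^+}^\star) = L e_k$, hence $\norm{\grad f_{k^+}(x_k)}^2 \leq L^2 e_k^2$. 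Substituting, the $2(d+4)\norm{\grad f_{k^+}(x_k)}^2$ term contributes $2(d+4)L^2 e_k^2$ inside the bracket multiplied by $\zeta(\kappa,e_k)\alpha_k^2$, which is the leading term of the first line of $\psi(e_k)$; the remaining pieces of Proposition~\ref{propo:preliminaries}(b) give the last line of $\psi(e_k)$. Collecting the $e_k^2$ coefficients ($e_k^2$ from the first term, $-\sigma\alpha_k e_k^2$ from strong convexity, $2(d+4)L^2\zeta\alpha_k^2 e_k^2$ from the gradient-norm bound) reproduces $\big(2(d+4)L^2\zeta(\kappa,e_k)\alpha_k^2 - \sigma\alpha_k + 1\big)e_k^2$, so altogether $\espc{\bar e_k^2}{x_k} \leq \psi(e_k)$.

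Finally I would pass from $\bar e_k^2$ to $e_{k+1}$. By Jensen's inequality (concavity of $\sqrt{\cdot}$), $\espc{\bar e_k}{x_k} \leq \sqrt{\espc{\bar e_k^2}{x_k}} \leq \sqrt{\psi(e_k)}$, and then combining with the already-derived inequality \eqref{eq:2V}, $\espc{e_{k+1}}{x_k} \leq \espc{\bar e_k}{x_k} + 2V \leq \sqrt{\psi(e_k)} + 2V$, which is \eqref{eq:theo1}. The main obstacle I anticipate is bookkeeping rather than conceptual: one must be careful that $x_{k^+}^\star$ (not $x_k^\star$) is the minimiser attached to $f_{k^+}$ so that both the strong-convexity and smoothness estimates apply with zero gradient at $y=x_{k^+}^\star$, and one must check that the projection $\mathcal{P}_{\mathcal{X}}$ does not increase the distance to $x_{k^+}^\star$ — but this is precisely the nonexpansiveness of the projection onto a geodesically convex set containing $x_{k^+}^\star$, which is already built into Lemma~\ref{lem:triangle} (the update with $\mathcal{P}_{\mathcal{X}}$ is stated there). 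A secondary subtlety is confirming that $\zeta(\kappa,e_k)$ genuinely depends only on $e_k$ and $\kappa$ and is thus measurable w.r.t.\ the conditioning $\sigma$-algebra, which follows since $e_k$ is a deterministic function of $x_k$.
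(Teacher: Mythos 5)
Your proposal is correct and follows essentially the same path as the paper's proof: take conditional expectations of the geodesic law-of-cosines bound in Lemma~\ref{lem:triangle}, split the cross term by adding and subtracting $\grad f_{k^+}(x_k)$ (strong convexity for one piece, Cauchy--Schwarz with Proposition~\ref{propo:preliminaries}(a) for the other), apply Proposition~\ref{propo:preliminaries}(b) together with $\|\grad f_{k^+}(x_k)\|\leq L e_k$ for the quadratic term, then finish with Jensen's inequality and \eqref{eq:2V}. The two minor presentational differences — deriving the restricted secant inequality directly from Definition~\ref{def:strong-convexity} and getting the gradient-norm bound from $\grad f_{k^+}(x_{k^+}^\star)=0$ rather than via the reverse triangle inequality — are equivalent to what the paper does.
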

\begin{proof}
	See Appendix \ref{sec:appendix-theo-main}.
\end{proof}

We can see that the conditional expectation of the tracking error depends on $\psi(e_k)$ which in turn depends on the parameters of the problem such as the Lipschitz constant $L$, manifold curvature $\kappa$, step size $\alpha_k$, oracle's precision $\eta$, and manifold dimension $d$. For instance, we can see that $\psi$ increases with $L$, however, its dependence on other parameters such as the step size and oracle's precision is not trivial. However, it turns out that if we pick a constant step size, we can obtain a simpler expression for the expected tracking error. Particularly, if we choose a constant step size in the interval $( 0 , \sigma/(2L^2(d+4)\zeta(\kappa,R))$, we can show that the expected tracking error remains bounded for $k\to\infty$, which is the main objective of this section. This is formalised in the below corollary.

\begin{corollary}\label{coro:delta}
	Under Assumption \ref{assu:standing}, if $\alpha_k = \alpha \in \left( 0 , \frac{\sigma}{2L^2(d+4)\zeta(\kappa,R)} \right)$ for all $k\in\N_0$, then
	\begin{align}\label{eq:limit}
		\underset{k\rightarrow\infty}{\mathrm{lim\, sup}}\ \espp{e_k} \leq \Delta \coloneqq \frac{D + 2V}{1 - \rho},
	\end{align}
	where $\rho \coloneqq \sqrt{2(d+4)L^2\zeta(\kappa,R)\alpha^2 - \sigma \alpha + 1}$, $D\coloneqq \alpha \max\{\theta_1,\theta_2\}$, and
	\begin{align*}
		\theta_1 &\coloneqq  \frac{L\eta(d+3)^{3/2}+(2/\eta)\delta d^{1/2}}{2\rho}, \\
		\theta_2 &\coloneqq \sqrt{\left( \frac{L^2\eta^2}{2} (d+6)^3 + 2L\delta (d+4)^2 + \frac{2\delta^2}{\eta^2} d \right) \zeta(\kappa,R)}\ .
	\end{align*}
\end{corollary}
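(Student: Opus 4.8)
The plan is to collapse the one-step bound of Theorem~\ref{theo:main} into a scalar affine recursion of the form $\espp{e_{k+1}}\le\rho\,\espp{e_k}+(D+2V)$ with a contraction factor $0\le\rho<1$, and then sum the resulting geometric series.

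The first step is to \emph{freeze the curvature factor at its worst admissible value}. Since $s\mapsto s/\tanh s$ is increasing on $(0,\infty)$, the map $t\mapsto\zeta(\kappa,t)=t\sqrt{|\kappa|}/\tanh(t\sqrt{|\kappa|})$ is nondecreasing; and since $e_k=\mathrm{dist}(x_k,x_{k^+}^\star)\le R$ by Assumption~\ref{assu:standing}(e), we get $\zeta(\kappa,e_k)\le\zeta(\kappa,R)$ on every sample path and for every $k$. I would use this to bound the two terms of $\psi(e_k)$ carrying a nonnegative multiple of $\zeta(\kappa,e_k)$: the last term is at most $\alpha^2\theta_2^2$, while for the quadratic coefficient I would split into cases — if $2(d+4)L^2\zeta(\kappa,e_k)\alpha^2-\sigma\alpha+1\ge0$ it is dominated by $2(d+4)L^2\zeta(\kappa,R)\alpha^2-\sigma\alpha+1=\rho^2$, and if it is negative then $(\,\cdot\,)e_k^2\le0\le\rho^2e_k^2$ anyway. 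Together with the identity $L\eta(d+3)^{3/2}+\frac{2\delta}{\eta}d^{1/2}=2\rho\theta_1$, this yields $\psi(e_k)\le\rho^2e_k^2+2\alpha\rho\theta_1e_k+\alpha^2\theta_2^2$.

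Next I would \emph{complete the square}. Writing $D=\alpha\max\{\theta_1,\theta_2\}$ and using $e_k,\alpha,\rho\ge0$ together with $\theta_1,\theta_2\le\max\{\theta_1,\theta_2\}$, the right-hand side is at most $(\rho e_k+D)^2$, so $\sqrt{\psi(e_k)}\le\rho e_k+D$; substituting into \eqref{eq:theo1} gives $\espp{e_{k+1}\mid x_k}\le\rho e_k+D+2V$, and the tower property then produces the scalar recursion $\espp{e_{k+1}}\le\rho\,\espp{e_k}+(D+2V)$. To confirm it contracts, set $a:=2L^2(d+4)\zeta(\kappa,R)>0$, so that $\rho^2=a\alpha^2-\sigma\alpha+1$: for $\alpha\in(0,\sigma/a)$ the quantity $a\alpha^2-\sigma\alpha=\alpha(a\alpha-\sigma)$ is negative, hence $\rho^2<1$, whereas $\rho^2\ge1-\sigma^2/(4a)>0$ since $\sigma\le L$ and $\zeta(\kappa,R)\ge1$ force $\sigma^2/(4a)<1$. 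Therefore $0\le\rho<1$, so $1-\rho>0$ and $\Delta$ is well defined. Unrolling the recursion gives $\espp{e_k}\le\rho^k\espp{e_0}+(D+2V)\sum_{j=0}^{k-1}\rho^j\le\rho^k\espp{e_0}+\frac{D+2V}{1-\rho}$, and letting $k\to\infty$ with $\rho^k\to0$ delivers \eqref{eq:limit}.

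The only genuinely delicate point I anticipate is the first step: one must justify $e_k\le R$ — which relies on the optimiser $x_{k^+}^\star$ lying in $\X$, so that Assumption~\ref{assu:standing}(e) applies to the pair $(x_k,x_{k^+}^\star)$ — and one must check that replacing $\zeta(\kappa,e_k)$ by $\zeta(\kappa,R)$ inside the quadratic term of $\psi$ is legitimate, since that term's coefficient need not be nonnegative; the short case distinction above resolves this. Everything after that is elementary algebra together with a geometric-series estimate.
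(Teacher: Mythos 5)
Your proof is correct and follows the paper's own route: replace $\zeta(\kappa,e_k)$ by its cap $\zeta(\kappa,R)$, fold the quadratic bound from Theorem~\ref{theo:main} into an affine recursion $\espp{e_{k+1}}\le\rho\,\espp{e_k}+D+2V$, and sum the geometric series; the paper obtains $\sqrt{\psi(e_k)}\le\rho e_k+D$ by citing Lemma~\ref{lem:quadratic}, while you prove the identical inequality by an explicit completing-the-square argument, which is the same step spelled out (and has the side benefit of making $\rho>0$ visible, so $\theta_1$ is well defined). Two small remarks: your case split on the sign of $2(d+4)L^2\zeta(\kappa,e_k)\alpha^2-\sigma\alpha+1$ is actually unnecessary, since $\zeta\ge1$, $\sigma\le L$, and $d\ge1$ force $2(d+4)L^2\zeta(\kappa,e_k)\alpha^2-\sigma\alpha\ge-\sigma^2/\bigl(8L^2(d+4)\zeta(\kappa,e_k)\bigr)>-1$, so that coefficient is always positive and the monotone substitution $\zeta(\kappa,e_k)\to\zeta(\kappa,R)$ goes through term by term without qualification; and the delicate point you flag — that $e_k\le R$ requires $x_{k^+}^\star\in\X$, while the paper defines $x_k^\star$ as the minimiser over all of $\M$ — is a genuine loose end, but one inherited from the paper's proof itself (which invokes Assumption~\ref{assu:standing}(e) for the same bound without comment), not a defect introduced by your argument.
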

\begin{proof} 
	See Appendinx \ref{sec:appendix-C}.
\end{proof}

Corollary \ref{coro:delta} shows that the expected value of the tracking error converges to a ball of radius $\Delta$, as long as we pick a constant step size $\alpha_k=\alpha$ in algorithm \eqref{eq:iterate} in the interval \sloppy $\alpha \in \left( 0 , \sigma/(2L^2(d+4)\zeta(\kappa,R))\right)$. That is, the algorithm will track the time-varying optimisers of \eqref{eq:problem} with an error of $\Delta$ as $k\to\infty$.

The upper bound $\Delta$ is used as a performance metric of the algorithm. Therefore, by minimising $\Delta$, we aim to improve the tracking performance of the algorithm.

\begin{remark}
	It is worth noticing that $\Delta$ depends on the manifold geometry through $\kappa$, as opposed to the Euclidean counterpart by \cite{shames2019online}. Moreover, $\Delta$ depends on the intrinsic dimension $d$ of the manifold, and not on the Euclidean ambient space dimension $n$ which could be considerably larger. This is due to the fact that we work directly on the manifold and perform appropriate extensions of notions such as geodesic convexity, exponential maps, etc. Therefore, it would be more costly to work in the larger ambient Euclidean space, and our Riemannian method should be used preferably, unless a specific structure in the larger space can be considerably exploited to simplify calculations. 
	The reported dependence of $\Delta$ on the manifold curvature is consistent with recent literature. For instance, in Riemannian \textsc{SVRG} algorithms, the convergence rate of the algorithm depends on the manifold curvature as observed by \cite{zhang2016riemannian}, see also \cite{zhang2016first} for similar conclusions on subgradient methods.
\end{remark}

\begin{remark}[Convergence of the algorithm]
	We can provide a complexity bound for the algorithm in terms of the number of iterations that takes the expected value of the tracking error to converge to a ball of radius $\Delta+\epsilon$, where $\epsilon$ is an arbitrary positive scalar. Particularly, from the proof of Corollary \ref{coro:delta}, we have that (see \eqref{eq:ek+1}),
	\begin{align*}
		\espp{e_{k+1}} \leq \rho\espp{e_k} + D + 2V,
	\end{align*}
	which leads to $\espp{e_k}\leq \rho^k\espp{e_0} + \frac{1-\rho^k}{1-\rho}(D+2V)$. With this inequality, we can show that there exists $K\in\N$ such that for all $k\geq K$, $\espp{e_k}\leq \Delta + \epsilon$. In fact, such $K$ satisfies
	\begin{align}\label{eq:K-complexity}
		K \leq \log\left[\frac{D+(1-\rho)\epsilon}{(1-\rho)\espp{e_0}-2V}\right](\log\rho)^{-1}.
	\end{align}
	It is assumed in \eqref{eq:K-complexity} that $(1-\rho)\espp{e_0}-2V>0$, otherwise $K=1$.
\end{remark}

In brief, we have shown that algorithm \eqref{eq:iterate} can track optimisers of the time-varying optimisation problem \eqref{eq:problem} up to an error of $\Delta$. We emphasise that this holds for any choice of constant step size in the interval $\alpha \in \left( 0 , \sigma/(2L^2(d+4)\zeta(\kappa,R))\right)$, and any choice of oracle's precision $\eta$. Indeed, 	$\Delta$ may be large depending on the underlying parameters. Therefore, it is crucial to find parameters that minimise $\Delta$, which is what we do in the following. Particularly, we find specific expressions for the step size $\alpha$ and oracle's precision $\eta$ such that the performance metric $\Delta$ is minimised.

\begin{theorem}\label{theo:optimal}
	Let $\bar{\eta}\coloneqq \left( 4\delta^2d/(L^2(d+6)^3) \right)^{1/4}$, and $\bar{\alpha}$ be the root\footnote{We note that the choice of step size $\bar{\alpha}$ in Theorem \ref{theo:optimal} always exists since $\Delta|_{\eta=\bar{\eta}}$ is convex in $\alpha$ over the interval $ \left(0,\tfrac{\sigma}{2L^2(d+4)\zeta(\kappa,R)}\right)$.} of $A\alpha^2+B\alpha+C=0$ in the interval $\Big( 0 , \frac{\sigma}{2L^2(d+4)\zeta(\kappa,R)} \Big)$ with
	\begin{align*}
		A &\coloneqq (8VL^2\zeta(\kappa,R)(d+4)+\sigma\bar{\theta})^2 - 8\bar{\theta}^2L^2\zeta(\kappa,R)(d+4) ,\\
		B &\coloneqq -4V\left( \bar{\theta}\sigma^2 + 8VL^2\zeta(\kappa,R)(d+4)\sigma + 8\bar{\theta}L^2\zeta(\kappa,R)(d+4) \right), \\
		C &\coloneqq (2\sigma V + 2\bar{\theta})^2 - 4\bar{\theta}^2 ,\\
		\bar{\theta} &\coloneqq \sqrt{\left( \frac{L^2\bar{\eta}^2}{2} (d+6)^3 + 2L\delta (d+4)^2 + \frac{2\delta^2}{\bar{\eta}^2} d \right) \zeta(\kappa,R)}.	
	\end{align*}
	Then, $\bar{\eta}$ and $\bar{\alpha}$ minimise $\Delta$ in \eqref{eq:limit}.
\end{theorem}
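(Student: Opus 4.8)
The plan is to treat the minimisation of $\Delta = (D+2V)/(1-\rho)$ as a two-stage problem: first minimise over the oracle precision $\eta$ for fixed $\alpha$, obtaining the stated $\bar\eta$; then minimise the resulting one-variable function of $\alpha$ over the admissible interval, obtaining $\bar\alpha$ as the root of a quadratic. The key observation for the first stage is that $\eta$ enters $\Delta$ only through $\theta_1$ and $\theta_2$ (the quantities $\rho$ and the $V$-terms do not depend on $\eta$), and moreover in $D=\alpha\max\{\theta_1,\theta_2\}$. I would first check that at the minimising $\eta$ the active branch of the max is $\theta_2$: the function $\theta_1(\eta)=(L\eta(d+3)^{3/2}+2\delta d^{1/2}/\eta)/(2\rho)$ and the function inside $\theta_2$, namely $h(\eta):=\tfrac{L^2\eta^2}{2}(d+6)^3+2L\delta(d+4)^2+\tfrac{2\delta^2}{\eta^2}d$, are both convex in $\eta>0$ (sums of $\eta^2$, constant, and $\eta^{-2}$ or $\eta,\eta^{-1}$ terms), so each has a unique minimiser; one shows $\theta_2$ dominates near its own minimiser, so minimising $\Delta$ over $\eta$ reduces to minimising $h(\eta)$ (equivalently $\theta_2$). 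Differentiating $h$, $h'(\eta)=L^2(d+6)^3\eta-4\delta^2 d\,\eta^{-3}=0$ gives $\eta^4 = 4\delta^2 d/(L^2(d+6)^3)$, i.e. exactly $\bar\eta$; convexity confirms it is the global minimiser. Substituting $\eta=\bar\eta$ produces the quantity $\bar\theta$ in the statement (which equals $\theta_2|_{\eta=\bar\eta}\cdot\rho$ up to the bookkeeping already folded into the definition of $\bar\theta$ via $\sqrt{\zeta(\kappa,R)}$).

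For the second stage, with $\eta=\bar\eta$ fixed, I would write $\Delta|_{\eta=\bar\eta}$ purely as a function of $\alpha$. Using $D = \alpha\theta_2 = \alpha\bar\theta$ (once we have argued $\theta_2\ge\theta_1$ at the optimum, or more carefully argued the max only makes $\Delta$ larger and the $\theta_2$-branch governs the minimiser) and $\rho=\rho(\alpha)=\sqrt{2(d+4)L^2\zeta(\kappa,R)\alpha^2-\sigma\alpha+1}$, we get
\[
\Delta(\alpha) = \frac{\alpha\bar\theta + 2V}{1-\rho(\alpha)}.
\]
Setting $\Delta'(\alpha)=0$ yields $\bar\theta\,(1-\rho) + (\alpha\bar\theta+2V)\rho' = 0$. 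Writing $q(\alpha):=2(d+4)L^2\zeta(\kappa,R)\alpha^2-\sigma\alpha+1=\rho^2$, so $\rho'=q'/(2\rho)$ with $q'(\alpha)=4(d+4)L^2\zeta(\kappa,R)\alpha-\sigma$, the stationarity condition becomes $2\bar\theta\rho(1-\rho) + (\alpha\bar\theta+2V)q' = 0$. Isolating the single $\rho$ term, squaring (using $\rho^2=q$), and collecting powers of $\alpha$ is a mechanical computation that produces a polynomial equation in $\alpha$; carrying it through, the $\alpha^3$ and $\alpha^4$ terms must cancel (this is the sanity check that the convexity footnote exploits) leaving exactly $A\alpha^2+B\alpha+C=0$ with the stated $A,B,C$. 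I would then invoke the footnote's claim — $\Delta|_{\eta=\bar\eta}$ is convex in $\alpha$ on the open interval $(0,\sigma/(2L^2(d+4)\zeta(\kappa,R)))$ — so this quadratic has a root $\bar\alpha$ in that interval and it is the unique interior minimiser; combining the two stages gives that $(\bar\eta,\bar\alpha)$ jointly minimise $\Delta$.

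The main obstacle I anticipate is two-fold. First, justifying rigorously that the $\max\{\theta_1,\theta_2\}$ is governed by $\theta_2$ at the optimum (or, if it is not always so, handling the crossover region) — one must compare the $\eta^{-2}$-growth rates and the coefficients $d^{1/2}$ versus $2d$, and the factor $\rho$ appearing in $\theta_1$ but not $\theta_2$ couples the two stages, which is delicate. Second, the brute-force algebra of squaring the stationarity condition and verifying the cubic and quartic terms vanish so that precisely $A\alpha^2+B\alpha+C$ remains: this is routine but error-prone, and getting the signs in $B$ (note the overall minus) and the structure of $A$ (a difference of squares-like expression $(\,\cdot\,)^2 - 8\bar\theta^2 L^2\zeta(d+4)$) right requires care. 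The convexity assertion in the footnote, which I would either prove by computing $\Delta''$ and showing the numerator is a positive combination on the interval, or accept as given, is what guarantees the quadratic's interior root is genuinely the minimiser rather than a spurious critical point introduced by squaring.
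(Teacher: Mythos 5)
Your two-stage plan — minimise over $\eta$ first, then over $\alpha$ — is exactly what the paper does, and your stage-two stationarity computation leads to the same equation the paper squares to obtain $A\alpha^2+B\alpha+C=0$. However, the step you flag as ``delicate'' is precisely where your argument has a gap: you need $D=\alpha\max\{\theta_1,\theta_2\}=\alpha\theta_2$ in order to decouple the $\eta$-minimisation from $\alpha$, and you leave the comparison $\theta_2\geq\theta_1$ unresolved, only gesturing that ``$\theta_2$ dominates near its own minimiser.'' The paper closes this cleanly by proving $\theta_2>\theta_1$ for \emph{all} admissible $(\alpha,\eta)$, not just near the optimum: from $\rho^2 = 2(d+4)L^2\zeta\alpha^2-\sigma\alpha+1$, using $L\geq\sigma$ and $\zeta\geq 1$, one has $\rho^2>\tfrac{\sigma^2}{2}\alpha^2-\sigma\alpha+1=\tfrac12(\sigma\alpha-1)^2+\tfrac12\geq\tfrac12$, hence $2\rho>\sqrt2$, which gives $\theta_1<(L\eta(d+3)^{3/2}+2\delta d^{1/2}/\eta)/\sqrt2$; squaring and comparing term-by-term with $\theta_2^2$ using $\zeta\geq 1$, $(d+6)^3>(d+3)^3$, and $(d+4)^2>d^{1/2}(d+3)^{3/2}$ yields $\theta_2^2>\theta_1^2$. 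This removes the coupling through $\rho$ that worried you and is the one nontrivial idea missing from your write-up.

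Two smaller corrections: $\bar\theta$ equals $\theta_2|_{\eta=\bar\eta}$ exactly, not $\theta_2|_{\eta=\bar\eta}\cdot\rho$ as you parenthetically suggest. Also, your anticipation that ``the $\alpha^3$ and $\alpha^4$ terms must cancel'' after squaring is unfounded: isolating $2\bar\theta\rho$ gives an expression linear in $\alpha$ on the other side, and squaring both sides produces only quadratics ($\rho^2=q(\alpha)$ is itself quadratic), so no cubic or quartic terms arise in the first place. The rest of the plan — convexity of $\theta_2^2$ in $\eta$ yielding $\bar\eta=(4\delta^2 d/(L^2(d+6)^3))^{1/4}$, and the footnote's convexity of $\Delta|_{\eta=\bar\eta}$ in $\alpha$ guaranteeing the quadratic's root in the admissible interval is the genuine minimiser — matches the paper.
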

\begin{proof} 
	See Appendix \ref{sec:appendix-D}.
\end{proof} 

As a summary of the results of this section, we essentially stated that, if we use algorithm \eqref{eq:iterate} with choices of step size $\alpha$ and oracle's precision $\eta$ as per Theorem \ref{theo:optimal}, then the algorithm will track the optimisers of the time-varying optimisation problem \eqref{eq:problem} with performance $\Delta$, which is in fact optimal for this choice of algorithm parameters.

\subsection{Regret bounds}\label{sec:regret}
To cope with changing environments, another popular measure of performance for OCO algorithms is the so-called \emph{dynamic regret} \cite{hall2015online,yang2016tracking}, which compares the cumulative loss of the learner/player to a sequence of optimal solutions.
Consequently, we consider the following regret definitions, as counterparts to our tracking and estimation errors respectively,
\begin{subequations}\label{eq:regret-def}
	\begin{align}
		\reg_T^{\textrm{Track.}} &\coloneqq \sum_{k=0}^{T} \espp{f_{k^+}(x_k)}  - f_{k^+}(x_{k^+}^\star) , \\
		\reg_T^{\textrm{Est.}}   &\coloneqq \sum_{k=0}^{T} \espp{f_{k^+}(x_{k+1})}  - f_{k^+}(x_{k^+}^\star).
	\end{align}
\end{subequations}
The main goal is to achieve sublinear regret. We say that an algorithm performs well if its regret is sublinear as a function of $T$, since this implies that, on the average, the algorithm performs as well as a \emph{clairvoyant} who selects the minimiser at each step.

Before providing the dynamic regret bounds, we impose an extra assumption, which is standard in the literature for constrained OCO, see e.g. \cite{zhang2018dynamic,mokhtari2016online,gao2018online,hazan2020faster}.
\begin{assumption}\label{assu:gradient}
	For all $f\in\mathcal{F}$ and $x\in\X$, $\exists G > 0$ such that $\norm{\grad f(x)}\leq G$. 
\end{assumption}
Note that this assumption is widely used in convex optimisation, and it holds here since $\mathcal{X}$ is bounded and the cost functions are geodesically smooth.

For the above definitions of regret, we present the following bounds.
\begin{theorem}\label{theo:regret}
	If the step-size $\alpha_k$ and oracle's precision $\eta_k$ are chosen as
	\begin{subequations}\label{eq:alpha-eta}	
		\begin{align}
			0 < \alpha_k &< \min\left\{\textstyle \sqrt{ \frac{-\bar{\mathcal{B}}_k - (\bar{\mathcal{B}}_k^2 - 4 \bar{\mathcal{A}} \bar{\mathcal{C}}_k)^{\frac{1}{2}}}{2\bar{\mathcal{A}}} } , \frac{\sigma}{2L^2(d+4)\zeta(\kappa,R)} \right\}, \\
			\textstyle 0<\sqrt{ \frac{-\mathcal{B}_k - (\mathcal{B}_k^2 - 4 \mathcal{A} \mathcal{C})^{\frac{1}{2}}}{2\mathcal{A}} } \leq  \eta_k &\leq \textstyle  \sqrt{ \frac{-\mathcal{B}_k + (\mathcal{B}_k^2 - 4 \mathcal{A} \mathcal{C})^{\frac{1}{2}}}{2\mathcal{A}} },
		\end{align}
	\end{subequations}
	where
	\begin{align*}
		\bar{\mathcal{A}} &\coloneqq   4L^2\delta^2(d+4)^4\zeta^2 - 4L^2\delta^2d(d+6)^3 \zeta^2,\quad 
		\bar{\mathcal{B}}_k \coloneqq - \frac{4L\delta(d+4)^2\zeta \bar{c}^2}{T_k},\quad	\bar{\mathcal{C}}_k \coloneqq \frac{\bar{c}^4}{ T_k^2 } \\
		\mathcal{A} &\coloneqq \frac{L^2(d+6)^3\zeta}{2},\quad 
		\mathcal{B}_k \coloneqq 2L\delta(d+4)^2\zeta - \frac{\bar{c}^2}{\alpha_k^2 T_k},\quad
		\mathcal{C} \coloneqq 2\delta^2d\zeta,
	\end{align*}
	and $T_k = 2^m$ for $k\in[2^m-1,2^{m+1}-2]$, $m\in\N_0$. Then, the tracking and estimation regrets satisfy, for any $T\geq 1$,
	\begin{subequations}\label{eq:regret-bounds}
		\begin{align}
			\reg_T^{\textrm{Track.}} &\leq \tfrac{G}{1-\max\{\rho_0,\rho_T\}} \bigg( \espp{e_0} - \rho_T \espp{e_T} + \tfrac{\bar{c}\sqrt{2}}{1-\sqrt{2}}\sqrt{T} + V_T \bigg),\label{eq:regret-track}\\
			\reg_T^{\textrm{Est.}} &\leq \tfrac{G}{1-\max\{\rho_1,\rho_{T+1}\}}\bigg(  \espp{\bar{e}_0} - \rho_{T+1} \espp{\bar{e}_T} + \tfrac{\bar{c}\sqrt{2}}{1-\sqrt{2}}\sqrt{T} + \max\{\rho_1,\rho_{T+1}\} V_T\bigg), \label{eq:regret-est}
		\end{align}
	\end{subequations}
	where $\bar{c} >0$, $V_T\coloneqq \sum_{k=0}^{T-1} \mathrm{dist}(x_{k^+}^\star,x_{k^++1}^\star)$, and \sloppy $\rho_k\coloneqq \sqrt{2(d+4)L^2\zeta(\kappa,R)\alpha_k^2 - \sigma \alpha_k + 1}$.
\end{theorem}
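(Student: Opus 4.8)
The plan is to reduce the regret bounds to the recursive inequality on the tracking error already established in the proof of Corollary~\ref{coro:delta}, via geodesic smoothness and Assumption~\ref{assu:gradient}. First I would bound each per-step loss: by geodesic $L$-smoothness~\eqref{eq:L-smoothness} applied at $x_{k^+}^\star$ (where $\grad f_{k^+}(x_{k^+}^\star)=0$) together with Assumption~\ref{assu:gradient}, or more directly by geodesic convexity and Cauchy--Schwarz, one gets
\begin{align*}
	f_{k^+}(x_k) - f_{k^+}(x_{k^+}^\star) \leq \inner{\grad f_{k^+}(x_k), \Exp_{x_k}^{-1}(x_{k^+}^\star)} \cdot(-1)\bigr|_{\text{sign}} \leq G\, \mathrm{dist}(x_k,x_{k^+}^\star) = G\,e_k,
\end{align*}
and similarly $f_{k^+}(x_{k+1}) - f_{k^+}(x_{k^+}^\star) \leq G\,\bar e_k$. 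Taking expectations and summing over $k=0,\dots,T$ yields
\begin{align*}
	\reg_T^{\textrm{Track.}} \leq G \sum_{k=0}^{T} \espp{e_k}, \qquad \reg_T^{\textrm{Est.}} \leq G \sum_{k=0}^{T} \espp{\bar e_k}.
\end{align*}

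The second step is to control $\sum_k \espp{e_k}$. From the recursion $\espp{e_{k+1}} \leq \rho_k \espp{e_k} + D_k + 2V$ of the form derived for Theorem~\ref{theo:main}/Corollary~\ref{coro:delta} (now with time-varying $\alpha_k,\eta_k$, so $\rho_k$ and the additive term $D_k$ become $k$-dependent), I would choose $\alpha_k$ and $\eta_k$ exactly as in~\eqref{eq:alpha-eta} so that the non-$V$ part of the additive term is forced to equal $\bar c / \sqrt{T_k}$; this is precisely what the quadratics $\bar{\mathcal{A}}\alpha^2+\bar{\mathcal{B}}_k\alpha+\bar{\mathcal{C}}_k$ and $\mathcal{A}\eta^2+\mathcal{B}_k\eta+\mathcal{C}$ (with roots as written) encode — the $\eta_k$-interval makes $\tfrac{L^2\eta^2}{2}(d+6)^3+2L\delta(d+4)^2+\tfrac{2\delta^2}{\eta^2}d \cdot\zeta$-type term at most $(\bar c/(\alpha_k\sqrt{T_k}))^2$, and the $\alpha_k$ bound similarly tames the linear-in-$e_k$ coefficient. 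Unrolling the recursion gives $\espp{e_k} \leq \rho^{(k)}\espp{e_0} + \sum_{j<k}\big(\prod_{i=j+1}^{k-1}\rho_i\big)(\bar c/\sqrt{T_j} + 2V)$; then I would sum over $k$, use $\rho_i \leq \max\{\rho_0,\rho_T\} =: \bar\rho < 1$ to bound the geometric factors, and swap the order of summation so that $\sum_k \bar\rho^{\,k-j} \leq 1/(1-\bar\rho)$. The $V$-contributions telescope into $V_T = \sum_{k=0}^{T-1}\mathrm{dist}(x_{k^+}^\star,x_{k^++1}^\star)$ (replacing the crude $2VT$ by the path-variation, which is the point of stating the bound this way rather than via Assumption~\ref{assu:standing}(b)), and the $\bar c/\sqrt{T_j}$ terms, with the doubling schedule $T_k=2^m$ on blocks of length $2^m$, sum to a geometric-type series $\sum_m 2^m \cdot 2^{-m/2} = \sum_m 2^{m/2} \leq \tfrac{\sqrt2}{\sqrt2-1}2^{M/2} = O(\sqrt T)$, giving the $\tfrac{\bar c\sqrt2}{1-\sqrt2}\sqrt T$ term (up to sign bookkeeping on $1-\sqrt2<0$, which is absorbed by the corresponding negative contribution). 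Collecting the leftover boundary terms $-\rho_T\espp{e_T}$ (and $\espp{e_0}$) produces exactly~\eqref{eq:regret-track}; the estimation case is identical starting the recursion one index later, which shifts $\rho_0,\rho_T \mapsto \rho_1,\rho_{T+1}$ and attaches the $\max\{\rho_1,\rho_{T+1}\}$ factor to $V_T$ as written in~\eqref{eq:regret-est}.

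The main obstacle I anticipate is the bookkeeping around the doubling schedule $T_k = 2^m$ and the time-varying $\rho_k$: one must verify that the chosen $\alpha_k,\eta_k$ genuinely lie in valid (nonempty) intervals for every $k$ — i.e. that the discriminants $\bar{\mathcal{B}}_k^2 - 4\bar{\mathcal{A}}\bar{\mathcal{C}}_k$ and $\mathcal{B}_k^2 - 4\mathcal{A}\mathcal{C}$ are nonnegative and the roots positive — and that the additive term in the recursion is then dominated by $\bar c/\sqrt{T_k}$ uniformly. A secondary subtlety is handling $\max$ over two $\rho$'s rather than a single $\rho$ when unrolling a recursion with step-dependent contraction factors; the clean way is to bound every partial product $\prod \rho_i$ by $\bar\rho^{\#\text{terms}}$ and only afterwards identify $\bar\rho = \max\{\rho_0,\rho_T\}$ using monotonicity of $\rho_k$ in $\alpha_k$ together with the structure of~\eqref{eq:alpha-eta}. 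Everything else — the per-step smoothness bound, the telescoping of $V_T$, and the geometric summation — is routine once the schedule is in place.
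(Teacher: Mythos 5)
Your proposal follows essentially the same route as the paper: bound the per-step loss by $G\,\espp{e_k}$ (resp.\ $G\,\espp{\bar e_k}$) via geodesic strong convexity, Cauchy--Schwarz, and Assumption~\ref{assu:gradient}; control $\sum_k\espp{e_k}$ through the linear recursion inherited from Corollary~\ref{coro:delta} while keeping the exact path variations $\mathrm{dist}(x_{k^+}^\star,x_{k^++1}^\star)$ instead of the crude $2V$; choose $(\alpha_k,\eta_k)$ per block so that $D_k\leq\bar c/\sqrt{T_k}$; and sum over the doubling schedule to get the $O(\sqrt T)$ term. The one technical divergence is in how the recursion is digested: you propose unrolling $\espp{e_k}\leq\big(\prod\rho_i\big)\espp{e_0}+\sum_j\big(\prod\rho_i\big)(D_j+d_j)$ and swapping sums, whereas the paper (Lemmas~\ref{lem:regret-tracking} and \ref{lem:regret-estimation}) simply sums the one-step inequality over $k$, adds and subtracts $\rho_T\espp{e_T}$, bounds $\rho_k\leq\max\{\rho_0,\rho_T\}$, and rearranges. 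Both are standard; the paper's sum-and-rearrange is slightly cleaner and is what actually produces the negative boundary term $-\rho_T\espp{e_T}$ --- your unrolling route would yield the same order of bound but not that exact expression, so your claim to recover \eqref{eq:regret-track} ``exactly'' is a small overstatement. Two further small imprecisions, both harmless: the $V$-contributions do not ``telescope'' --- they are just retained as the sum $V_T$ --- and the reduction $\rho_k\leq\max\{\rho_0,\rho_T\}$ rests on convexity of $\rho_k^2$ in $\alpha_k$ combined with monotonicity of the $\alpha_k$ schedule, not on monotonicity of $\rho_k$ in $\alpha_k$ (which fails over the whole admissible interval since the parabola dips and rises).
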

\begin{proof} 
	See Appendix \ref{sec:appendix-regret-1}.
\end{proof}

Theorem \ref{theo:regret} provides a choice of step size $\alpha_k$ and oracle's precision $\eta_k$ such that the regret satisfies \eqref{eq:regret-bounds}. Note that $\alpha_k$ and $\eta_k$ are piecewise constant in periods of length $2^m$. For example, the periods $T_k$ have the form $T_0 = 1, T_1 = T_2 = 2, T_3=\cdots=T_6=4$, etc., and thus $(\alpha_1,\eta_1)=(\alpha_2,\eta_2)$, $(\alpha_3,\eta_3)=\cdots =(\alpha_6,\eta_6)$, and so on.

We have showed in \eqref{eq:regret-bounds} that, disregarding $V_T$, the proposed zeroth-order algorithm \eqref{eq:iterate} achieves sublinear regret, which highlights its performance. This error term $V_T$ relates to the change in minimisers (or path variation) given the time-varying nature of the problem at hand. Obviously, if $V_T$ is sub-linear in $T$, we can see that the presented regret bounds would be sub-linear in $T$ as well. 
It is therefore required that path variations diminish with $T$ to achieve sublinear regret. This holds if the target being tracked slows down over time or eventually stops \cite{hall2015online}.

\section{Numerical example}

To validate our results, we apply our zeroth-order algorithm to the problem of computing the Karcher mean of a collection of symmetric positive definite (SPD) matrices, also known as Riemannian centre of mass or Fr\'echet mean \cite{bini2013computing}. This problem appears in a number of applications such as medical imaging \cite{fletcher2007riemannian}, image segmentation \cite{rathi2007segmenting}, signal estimation \cite{kurtek2011signal}, and particle filtering \cite{bordin2018nonlinear}. Note that the Karcher mean is guaranteed to exist and be unique on a Hadamard manifold, see e.g. \cite{berger2012panoramic}. 

We consider a time-varying version of the Karcher mean problem which arises in online scenarios. For instance, we may want to find a central representative for a collection of online noisy measurements of a moving object. Formally, we consider that the measurements are $N$ SPD matrices of dimension $m\times m$ that become available at each time $k$, which we denote by $\{A_{k,1},\dots,A_{k,N}\}$.

The manifold of SPD matrices is defined as $\M\coloneqq \{ X\in\R^{m\times m}: X=X^\top \succ 0 \}$. If we equip $\M$ with the Riemannian metric
\begin{align*}
	\inner{M,N}_X \coloneqq \trace{X^{-1}M X^{-1}N},\quad M,N\in T_X\M,
\end{align*}
for every $X\in\M$, then the SPD manifold is a Hadamard manifold \cite{bacak2014convex}. The Riemannian distance is given by
\begin{align*}
	\mathrm{dist}(X,Y) \coloneqq \norm{\log\left( X^{-1/2}YX^{-1/2} \right)}_F,
\end{align*}
where $\norm{\cdot}_F$ corresponds to the Euclidean (or Frobenius) norm, and the exponential mapping is 
\begin{align*}
	\Exp_X(M) = X^{1/2}\mathrm{exp}\left( X^{-1/2}MX^{-1/2} \right)X^{1/2},\quad M\in T_X\M,
\end{align*}
for every $X\in\M$, where $\mathrm{exp}$ denotes the matrix exponential.
The time-varying cost function is defined as
\begin{align}\label{eq:cost}
	f_k(X) \coloneqq \frac{1}{2N} \sum_{i=1}^{N} \mathrm{dist}(X,A_{k,i})^2, \quad k\in\N_0.
\end{align}
The Karcher mean for each set of measurements $\{A_{k,i}\}_{i=1}^N$ received at time $k$ is the unique minimiser of $f_k(X)$, i.e. $x_k^\star \coloneqq \arg\min_{X\in\M} f_k(X)$, for all $k\in\N_0$.
The cost function \eqref{eq:cost} is known to be geodesically strongly convex with $\sigma=1$ and geodesically $\zeta$--smooth (i.e. \eqref{eq:L-smoothness-gradient} holds with $L=\zeta$), see e.g. \cite{zhang2016first} and \cite{zhang2016riemannian}. We consider $\delta=0.001$, $N=10$, two problem sizes $m\in\{3,9\}$, and the manifold dimension is $d=m(m+1)/2$. For this example, we estimated\footnote{
	We emphasise that the parameters $V$ and $\zeta$ were numerically bounded in this simulation. This is a common approach used in experimental optimisation methods, see e.g. \cite{bunfra16,ahmed2020combining}, where the parameters are found so that the underlying assumptions are at least satisfied in the simulation/experimental data.
} $V=0.5$ and $\zeta=1.5$.
The step size and oracle's precision for each $m\in\{3,9\}$ are chosen as per Theorem \ref{theo:optimal}, which gives $\bar{\alpha}_3 = 0.0074$, $\bar{\alpha}_9=0.0015$, $\bar{\eta}_3=0.0089$, and $\bar{\eta}_9=0.005$.
The matrices $\{A_{k,i}\}_{i=1}^N$ were randomly generated using the \textsc{Manopt} toolbox in \textsc{Matlab}, see \cite{boumal2014manopt}.

Figure \ref{fig:experiment} (left) depicts the average tracking error $\espp{e_k}$  after implementing the zeroth-order algorithm \eqref{eq:iterate} for 100 random runs and different values of problem size. 
We can see the error converges, and its asymptotic value is upper bounded by some $\Delta$. That is, in expectation, the algorithm can track the optimisers up to an asymptotic error. The theoretical asymptotic bounds from Corollary \ref{coro:delta} are $\Delta_3 = 543.73$ and $\Delta_9 = 2666$, for $m\in\{3,9\}$. Comparing with the asymptotic values in Figure \ref{fig:experiment}, we can conclude that these bounds can become conservative depending on the application. Obtaining less conservative bounds is an open question for future research.
It can also be seen that the larger the problem size, the longer $\espp{e_k}$ takes to converge.

Lastly, we have also included a comparison with the first-order version of \eqref{eq:iterate}, in which the gradient of the function is fully available at every iteration $k$. We note that this is for illustration purposes only, since the results of this paper assume only cost function evaluations are available. We note a difference in rate of convergence, as expected, but also in terms of the asymptotic $\Delta$ they achieve. We can see that if we have access to the gradient, then $\Delta$ is smaller. This can actually be interpreted from the theory developed above. Particularly,
from Proposition \ref{propo:preliminaries}, we know the oracle is a biased estimator of the gradient of the cost function, and this difference can be upper bounded. These error terms wound not appear if the gradient is available at every $k$, and thus $\Delta$ in Corollary \ref{coro:delta} would be larger for the zeroth-order case. This, in link with Remark \ref{rem:submanifold}, can lead to interesting future work related to the choice of oracles and the way $u$ is sampled when computing them.

\begin{remark} 
	In the context of our example, it is worth mentioning that for applications such as diffusion tensor imaging, the Euclidean averaging of SPD matrices often leads to a ``swelling effect'', artificial extra diffusion introduced in computation, see e.g. \cite{arsigny2007geometric}. Particularly, it means the determinant of the Euclidean mean can be strictly larger than the original determinants. In diffusion tensor imaging, diffusion tensors correspond to covariance matrices of the local Brownian motion of water molecules. Introducing more diffusion is physically unacceptable in this context. Therefore, Riemannian approaches such as the one presented in this paper would be preferred. 
\end{remark}

\begin{figure}
	\centering 
	\begin{tabular}{cc}
		\includegraphics[scale=0.55]{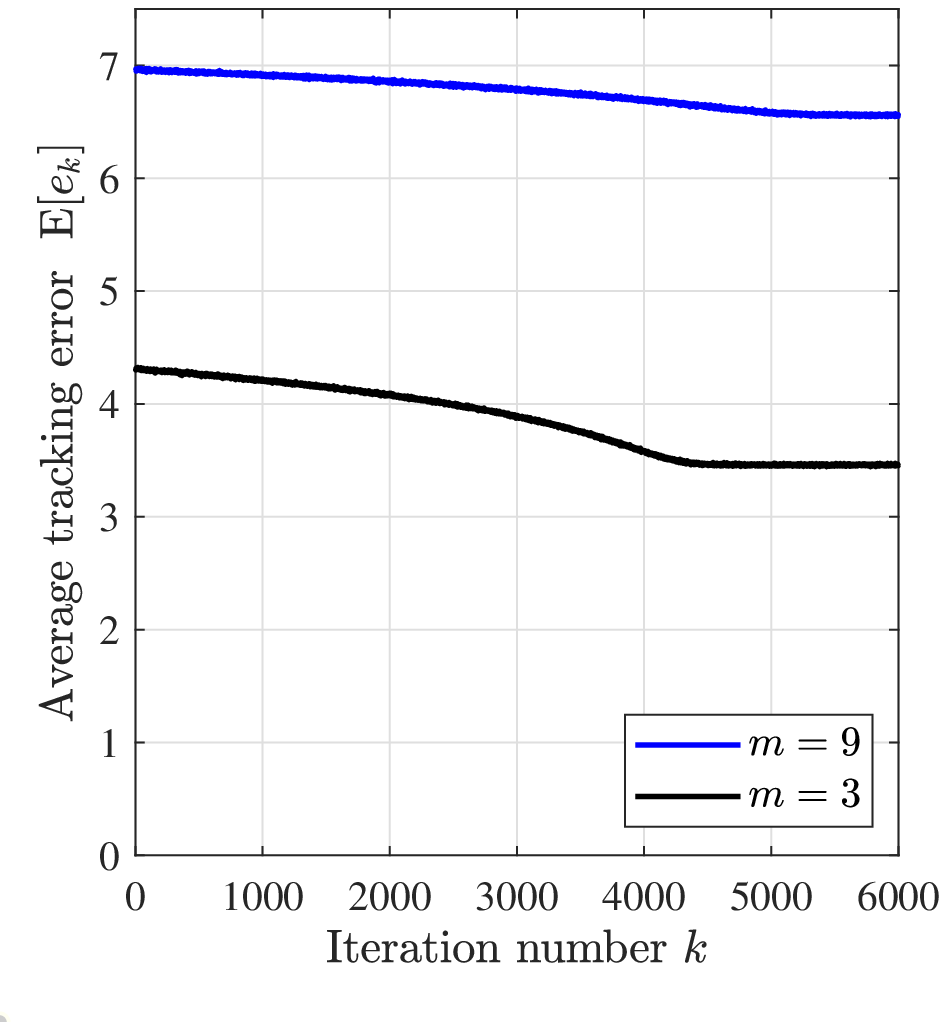} &
		\includegraphics[scale=0.55]{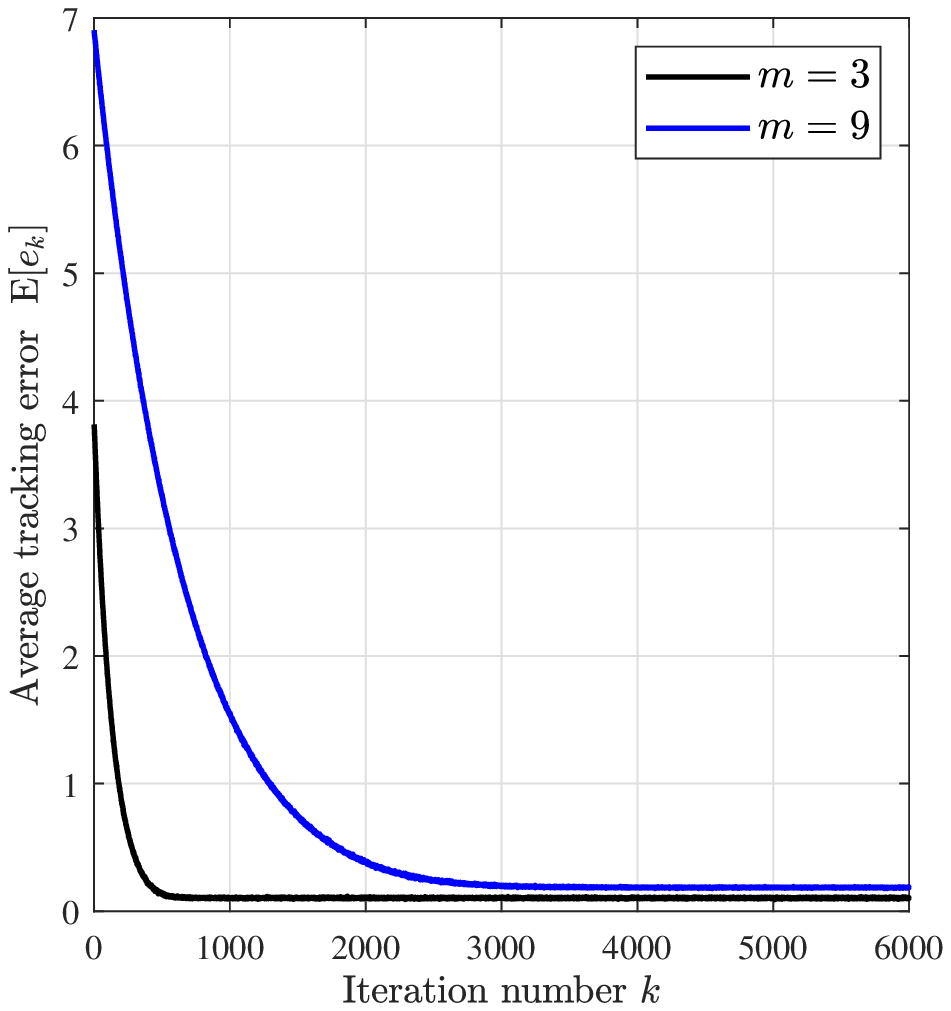}
	\end{tabular}
	\caption{Algorithm's average tracking error---over 100 random scenarios---for different values of problem size $m\in\{3,9\}$ and for: (left) zeroth-order iterates \eqref{eq:iterate}, and (right) first-order iterates that use $\grad f_k$ instead of $g_{\eta,k}$ in \eqref{eq:iterate}.}
	\label{fig:experiment}
\end{figure}



\section{Conclusions}
A gradient-free algorithm for the minimisation of time-varying cost functions on Hadamard manifolds was proposed. Bounds on the expectation of the tracking error and on dynamic regret were derived, and choices for algorithm parameters  such that the asymptotic tracking error bound is minimised were provided. Finally, the theoretical results were validated via numerical experiments.

Future work includes the extension to a more general class of Riemannian manifolds by trying to relax the way we sample the random vector $u$ in \eqref{eq:oracle}, and also the extension to the non-convex time-varying case. In addition, looking at the problem in a different set of coordinates at each step is also an interesting future direction. Lastly, exploring different gradient approximations in the oracle for this time-varying context is also of interest.


\appendix

\section{Auxiliary lemmas}

\begin{lemma}[\cite{bacak2014convex}]\label{lem:nonexpansive}
	Let $\M$ be a Hadamard manifold and $\mathcal{X}\subset \M$ a closed convex set. Then, the mapping $\mathcal{P}_{\mathcal{X}}(x)\coloneqq \{y\in\mathcal{X}:\mathrm{dist}(x,z)=\inf_{z\in\mathcal{X}}\mathrm{dist}(x,z)\}$ is single-valued and nonexpansive, that is, we have $\mathrm{dist}(\mathcal{P}_{\mathcal{X}}(x),\mathcal{P}_{\mathcal{X}}(y)) \leq \mathrm{dist}(x,y)$ for every $x,y\in\M$.
\end{lemma}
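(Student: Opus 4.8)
The plan is to establish, in order, three facts: that the nearest-point set $\mathcal{P}_{\mathcal{X}}(x)$ is nonempty (existence), that it is a singleton (single-valuedness), and that the resulting map is $1$-Lipschitz for $\mathrm{dist}$ (nonexpansiveness). \emph{Existence} is routine: a Hadamard manifold is complete, so by Hopf--Rinow every closed bounded subset of $\M$ is compact; a minimising sequence $z_n\in\mathcal{X}$ for $\mathrm{dist}(x,\cdot)$ eventually lies in a closed ball about $x$, hence subconverges, and its limit lies in $\mathcal{X}$ (closedness) and realises $r:=\inf_{z\in\mathcal{X}}\mathrm{dist}(x,z)$. For \emph{single-valuedness} I would use that on a Hadamard manifold $z\mapsto\mathrm{dist}(x,z)^2$ is strongly convex along geodesics (the CN / Bruhat--Tits inequality): if $y_0,y_1$ both attain $r$, the midpoint $m$ of the geodesic $[y_0,y_1]$ — which lies in $\mathcal{X}$ because $\mathcal{X}$ is geodesically convex — satisfies $\mathrm{dist}(x,m)^2\le\tfrac12\mathrm{dist}(x,y_0)^2+\tfrac12\mathrm{dist}(x,y_1)^2-\tfrac14\mathrm{dist}(y_0,y_1)^2=r^2-\tfrac14\mathrm{dist}(y_0,y_1)^2$, forcing $y_0=y_1$ (the case $r=0$ gives $x\in\mathcal{X}$, so $\mathcal{P}_{\mathcal{X}}(x)=x$).

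The engine of the nonexpansiveness proof is a variational inequality: for $p:=\mathcal{P}_{\mathcal{X}}(x)$ and every $z\in\mathcal{X}$,
\[
  \inner{\Exp_p^{-1}(x),\Exp_p^{-1}(z)}_p\le 0 .
\]
To obtain it, note that $c(t):=\Exp_p\!\big(t\,\Exp_p^{-1}(z)\big)$ lies in $\mathcal{X}$ for $t\in[0,1]$ by geodesic convexity, so $t\mapsto\mathrm{dist}(x,c(t))$ attains its minimum on $[0,1]$ at $t=0$; by the first-variation formula its right derivative there equals $-\mathrm{dist}(x,p)^{-1}\inner{\Exp_p^{-1}(x),\Exp_p^{-1}(z)}_p$, which must be nonnegative (trivial if $x\in\mathcal{X}$, since then $p=x$).

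For \emph{nonexpansiveness}, set $p=\mathcal{P}_{\mathcal{X}}(x)$, $q=\mathcal{P}_{\mathcal{X}}(y)$; if $p=q$ there is nothing to prove, so assume $p\neq q$. Introduce the constant-speed geodesics $\alpha(t):=\Exp_x\!\big(t\,\Exp_x^{-1}(p)\big)$ and $\beta(t):=\Exp_y\!\big(t\,\Exp_y^{-1}(q)\big)$ on $[0,1]$, so $\alpha(0)=x,\ \alpha(1)=p,\ \beta(0)=y,\ \beta(1)=q$. On a Hadamard manifold the distance is jointly convex along pairs of geodesics, so $\ell(t):=\mathrm{dist}(\alpha(t),\beta(t))$ is convex on $[0,1]$. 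Since $\dot\alpha(1)=-\Exp_p^{-1}(x)$ and $\dot\beta(1)=-\Exp_q^{-1}(y)$, the first-variation formula for the distance between two moving points gives
\[
  \ell'(1^-)=\frac{1}{\mathrm{dist}(p,q)}\Big(\inner{\Exp_p^{-1}(x),\Exp_p^{-1}(q)}_p+\inner{\Exp_q^{-1}(y),\Exp_q^{-1}(p)}_q\Big).
\]
Applying the variational inequality at $p$ with $z=q\in\mathcal{X}$ and at $q$ with $z=p\in\mathcal{X}$ makes both terms nonpositive, so $\ell'(1^-)\le 0$. Because $\ell$ is convex its left derivative is nondecreasing, hence $\ell'(t^-)\le\ell'(1^-)\le 0$ on $(0,1]$; thus $\ell$ is nonincreasing and $\mathrm{dist}(\mathcal{P}_{\mathcal{X}}(x),\mathcal{P}_{\mathcal{X}}(y))=\mathrm{dist}(p,q)=\ell(1)\le\ell(0)=\mathrm{dist}(x,y)$, as claimed.

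I expect the main obstacle to be the nonexpansiveness step: existence and uniqueness are standard, but the computation of $\ell'(1^-)$ (where the sign conventions in the first-variation formula must be tracked carefully) together with the convexity of $t\mapsto\mathrm{dist}(\alpha(t),\beta(t))$ is precisely where nonpositive curvature is essential — this convexity fails under positive curvature, so the argument genuinely uses the Hadamard hypothesis. A fully equivalent alternative I would keep in reserve is the quasilinearisation argument of \cite{bacak2014convex}: write $\mathrm{dist}(p,q)^2$ as a telescoping sum of Berg--Nikolaev quasi-inner-products of $\overrightarrow{pq}$ with $\overrightarrow{px},\overrightarrow{xy},\overrightarrow{yq}$, discard the two nonpositive contributions furnished by the variational inequality, and bound the remaining term by $\mathrm{dist}(p,q)\,\mathrm{dist}(x,y)$ via the Cauchy--Schwarz inequality valid in $\mathrm{CAT}(0)$ spaces.
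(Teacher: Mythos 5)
Your proof is correct. Note first that the paper itself offers no proof of this lemma: it is quoted verbatim from \cite{bacak2014convex} as an auxiliary result, so there is no "paper proof" to match against; what you have written is a legitimate self-contained argument in the smooth (Hadamard-manifold) setting. Each step checks out: existence via Hopf--Rinow and closedness; uniqueness via the CN/Bruhat--Tits midpoint inequality together with geodesic convexity of $\mathcal{X}$ (which is the intended reading of ``convex'' here); the obtuse-angle variational inequality $\inner{\Exp_p^{-1}(x),\Exp_p^{-1}(z)}_p\le 0$ via the first variation of arc length along $c(t)=\Exp_p(t\,\Exp_p^{-1}(z))$; and the sign computation $\ell'(1^-)=\mathrm{dist}(p,q)^{-1}\bigl(\inner{\Exp_p^{-1}(x),\Exp_p^{-1}(q)}_p+\inner{\Exp_q^{-1}(y),\Exp_q^{-1}(p)}_q\bigr)\le 0$, which combined with convexity of $t\mapsto\mathrm{dist}(\alpha(t),\beta(t))$ forces $\ell(1)\le\ell(0)$. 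The only place requiring care, as you note, is the joint convexity of the distance function along pairs of geodesics, which is exactly where nonpositive curvature enters. Your route differs from the one in the cited reference, which works in general $\mathrm{CAT}(0)$ spaces via quasilinearisation (the Berg--Nikolaev quasi-inner product, the telescoping decomposition of $\mathrm{dist}(p,q)^2$, and the metric Cauchy--Schwarz inequality) and therefore avoids any differentiability; your first-variation argument is more geometric and exploits the smooth structure available on a Riemannian manifold, at the cost of not extending to nonsmooth $\mathrm{CAT}(0)$ spaces. Either is acceptable for the purposes of this paper.
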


\begin{lemma}[\cite{shames2019online}]\label{lem:quadratic}
	Let $x,y,a,c\geq 0$ and $b\in\R$. Then $x^2 \leq a y^2 + b y + c$ implies $x \leq y \sqrt{a} + D$,
	where $D \coloneqq \max\left\{ \frac{b}{2\sqrt{a}}, \sqrt{c}  \right\}$.
\end{lemma}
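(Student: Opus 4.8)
The plan is to start from the hypothesis $x^2 \leq ay^2 + by + c$ and bound the right-hand side by a perfect square of the form $(y\sqrt{a} + D)^2 = ay^2 + 2D\sqrt{a}\, y + D^2$. It suffices to choose $D \geq 0$ so that $by + c \leq 2D\sqrt{a}\, y + D^2$ for all $y \geq 0$; once this is established, we get $x^2 \leq (y\sqrt{a}+D)^2$, and since both $x \geq 0$ and $y\sqrt{a}+D \geq 0$, taking square roots preserves the inequality and yields $x \leq y\sqrt{a} + D$.

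The core of the argument is therefore to verify that $D := \max\{b/(2\sqrt{a}), \sqrt{c}\}$ makes $by + c \leq 2D\sqrt{a}\, y + D^2$ hold for every $y \geq 0$. This splits into two independent comparisons handled termwise: first, $D^2 \geq c$ follows directly because $D \geq \sqrt{c}$ and $c \geq 0$; second, $2D\sqrt{a}\, y \geq by$ for all $y \geq 0$ follows because $D \geq b/(2\sqrt{a})$ (note this is where we use $\sqrt{a} > 0$, i.e.\ $a > 0$, implicit in dividing by $2\sqrt{a}$; if $a = 0$ one argues separately, or the claim is read with the convention that $b/(2\sqrt a)$ is replaced appropriately) gives $2D\sqrt{a} \geq b$, and multiplying by $y \geq 0$ preserves the inequality. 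Adding the two comparisons gives exactly $by + c \leq 2D\sqrt{a}\, y + D^2$, hence $ay^2 + by + c \leq ay^2 + 2D\sqrt{a}\,y + D^2 = (y\sqrt{a}+D)^2$.

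Chaining these together: $x^2 \leq ay^2 + by + c \leq (y\sqrt{a}+D)^2$, and monotonicity of the square root on $[0,\infty)$ applied to the nonnegative quantities $x$ and $y\sqrt{a}+D$ gives $x \leq y\sqrt{a}+D$, as claimed. The only mild subtlety — the ``main obstacle'' such as it is — is the edge case $a = 0$, where $\sqrt{a} = 0$ and the expression $b/(2\sqrt{a})$ is undefined; in that degenerate situation one instead observes $x^2 \leq by + c$ and the statement as used in this paper always has $a > 0$ (e.g.\ $a = \rho^2 > 0$ in the applications), so I would simply note that $a > 0$ is assumed, or restrict attention to that case, and the rest of the argument is entirely elementary.
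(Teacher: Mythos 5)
Your proof is correct. The paper itself gives no proof of this lemma---it is quoted verbatim from \cite{shames2019online} as an auxiliary fact---so there is nothing to compare against; your completing-the-square argument (dominating $ay^2+by+c$ by $(y\sqrt{a}+D)^2$ via the termwise bounds $c\leq D^2$ and $by\leq 2D\sqrt{a}\,y$, then taking square roots of nonnegative quantities) is the standard and correct one, and your handling of the degenerate case $a=0$ is appropriate since every application in this paper has $a=\rho^2>0$.
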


\begin{lemma}[\cite{li2020stochastic}]\label{lem:identities}
	Suppose $\mathcal{X}$ is a $d$--dimensional subspace of $\R^n$, with orthogonal projection matrix $P\in\R^{n\times n}$, $u_0\sim\mathcal{N}(0,I_n)$, and $u=P u_0$ is the orthogonal projection of $u_0$ onto $\mathcal{X}$. Then,
	\begin{enumerate}
		\item[(a)] $x = \frac{1}{\nu} \int_{\R^n} \inner{x,u} u e^{-\frac{1}{2}\norm{u_0}^2}du_0$, $\forall x\in\mathcal{X}$.
		\item[(b)] For $p\in[0,2]$, $\espu{\norm{u}^p}\leq d^{p/2}$, and if $p\geq 2$, then $\espu{\norm{u}^p}\leq (d+p)^{p/2}$.
		\item[(c)] $\espu{\norm{\inner{\grad f_{k^+}(x),u}u}^2} \leq (d+4)\norm{\grad f_{k^+}(x)}^2$.
	\end{enumerate}
\end{lemma}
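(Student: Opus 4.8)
The statement is the Riemannian‑projection version of the standard Gaussian‑smoothing moment lemmas, and the plan is to reduce all three parts to elementary facts about the spherical Gaussian on $\R^d$ by passing to an adapted orthonormal basis. Fix an orthonormal basis $v_1,\dots,v_d$ of the subspace $\mathcal{X}$ and extend it to an orthonormal basis $v_1,\dots,v_n$ of $\R^n$. Writing $u_0=\sum_{i=1}^{n}z_iv_i$ with $z_1,\dots,z_n$ i.i.d.\ $\mathcal{N}(0,1)$, orthogonality of $P$ gives $u=Pu_0=\sum_{i=1}^{d}z_iv_i$, so $u$ is a standard Gaussian supported on $\mathcal{X}\cong\R^d$; moreover $\espu{\cdot}$ is expectation over $(z_1,\dots,z_n)$, we have $\norm{u}^2=\sum_{i=1}^{d}z_i^2\sim\chi_d^2$, and for any $w\in\mathcal{X}$ one has $\inner{w,u}=\inner{Pw,u_0}=\inner{w,u_0}$ since $Pw=w$.

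Part (a) is then immediate: using $\inner{x,u}=\inner{x,u_0}$ and $u=Pu_0$, the right-hand side equals $P\,\espu{\inner{x,u_0}u_0}=P\big(\espu{u_0u_0^{\top}}\big)x=PI_nx=Px=x$, the last equality because $x\in\mathcal{X}$. (Equivalently, in the $v$-basis the integral is just the $d\times d$ Gaussian second-moment identity against $x$.)

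For part (b) with $0\le p\le 2$, concavity of $t\mapsto t^{p/2}$ and Jensen's inequality give $\espu{\norm{u}^p}=\espu{(\norm{u}^2)^{p/2}}\le(\espu{\norm{u}^2})^{p/2}=d^{p/2}$, using $\espu{\norm{u}^2}=d$. For $p\ge 2$ I would use the $\chi_d^2$ representation: with $s=p/2\ge 1$,
\[
	\espu{\norm{u}^p}\;=\;\frac{2^{s}\,\Gamma\!\left(s+\tfrac d2\right)}{\Gamma\!\left(\tfrac d2\right)},
\]
and then bound $\Gamma(s+\tfrac d2)/\Gamma(\tfrac d2)\le(s+\tfrac d2)^{s}$. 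This estimate is the one genuinely non-routine step: it follows from log-convexity of $\Gamma$ together with the digamma bound $\psi(x)\le\log x$, via $\log\Gamma(s+\tfrac d2)-\log\Gamma(\tfrac d2)=\int_0^{s}\psi(\tfrac d2+t)\,dt\le s\,\psi(\tfrac d2+s)\le s\log(\tfrac d2+s)$. Multiplying through by $2^{s}$ yields $\espu{\norm{u}^p}\le(2s+d)^{s}=(d+p)^{p/2}$; alternatively one may simply invoke the corresponding moment bound of Nesterov--Spokoiny \cite{nesspo17}.

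For part (c), set $g\coloneqq\grad f_{k^+}(x)\in\mathcal{X}$, so $\norm{\inner{g,u}u}^2=\inner{g,u}^2\norm{u}^2$ with $\inner{g,u}=\inner{g,u_0}$. Rotating $v_1,\dots,v_d$ inside $\mathcal{X}$ so that $g=\norm{g}\,v_1$, we get $\inner{g,u_0}^2=\norm{g}^2z_1^2$ and $\norm{u}^2=\sum_{i=1}^{d}z_i^2$, and a direct moment computation gives
\[
	\espu{\inner{g,u}^2\norm{u}^2}=\norm{g}^2\Big(\espu{z_1^4}+\textstyle\sum_{i=2}^{d}\espu{z_1^2}\,\espu{z_i^2}\Big)=(d+2)\norm{g}^2\le(d+4)\norm{g}^2,
\]
which is the claim. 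The only step that is not pure bookkeeping is the Gamma-function moment bound in part (b); once the adapted-basis reduction is in place, everything else is routine Gaussian calculus.
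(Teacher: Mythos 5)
Your proof is correct. The paper does not prove this lemma at all --- it imports it verbatim from \cite{li2020stochastic} --- and your argument (adapted orthonormal basis reducing $u$ to a standard Gaussian on $\R^d$, Jensen for $p\le 2$, the $\chi_d^2$/Gamma-function moment bound for $p\ge 2$, and the direct fourth-moment computation giving $d+2\le d+4$ in part (c)) is precisely the standard route taken in that reference and in Nesterov--Spokoiny, so there is nothing to flag.
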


\begin{lemma}[\cite{zhang2016first}]\label{lem:zhang}
	If $a,b,c$ are the sides (\ie lengths) of a geodesic triangle in an Alexandrov space with curvature lower bounded by $\kappa$, and $A$ is the angle between sides $b$ and $c$, then
	\begin{align*}
		a^2 \leq \frac{c\sqrt{|\kappa|}}{\tanh(c\sqrt{|\kappa|})} b^2 + c^2 - 2bc\cos(A).
	\end{align*}
\end{lemma}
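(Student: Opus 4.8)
The plan is to reduce the statement to the classical law-of-cosines comparison in the model surface of constant curvature $\kappa$, and then to an elementary one-variable inequality. Throughout write $\mu:=\sqrt{|\kappa|}$ and abbreviate the claimed factor as $\zeta(\kappa,c):=c\sqrt{|\kappa|}/\tanh(c\sqrt{|\kappa|})=\mu c\coth(\mu c)$. First I would invoke the hinge version of the Toponogov comparison theorem for length spaces with curvature bounded below by $\kappa$ (Burago--Burago--Ivanov, \emph{A Course in Metric Geometry}): if a geodesic hinge has legs of lengths $b,c$ and opening angle $A$ at the common vertex, then the length $a$ of the geodesic closing the hinge is at most the length $\tilde a$ of the corresponding side of the comparison triangle built from the \emph{same} legs $b,c$ and the \emph{same} angle $A$ in the model plane $M_\kappa^2$. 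Hence it suffices to prove the inequality with $a$ replaced by $\tilde a$.

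If $\kappa=0$ the model plane is $\R^2$, the factor $\zeta(\kappa,c)$ degenerates to its limiting value $1$, and the claimed inequality is exactly the Euclidean law of cosines, holding with equality. So assume $\kappa<0$, where $M_\kappa^2$ is the hyperbolic plane of curvature $\kappa$ and the law of cosines reads
\[
\cosh(\mu\tilde a)=\cosh(\mu b)\cosh(\mu c)-\sinh(\mu b)\sinh(\mu c)\cos A .
\]
It then remains to deduce from this identity that $\tilde a^2\le\zeta(\kappa,c)\,b^2+c^2-2bc\cos A$.

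This last, purely analytic, step is the crux. I would fix $c$, $A$ and $\mu$, regard both sides as functions of $b\ge 0$, and set $F(b):=\zeta(\kappa,c)\,b^2+c^2-2bc\cos A-\tilde a(b)^2$. Differentiating the law of cosines gives $\tilde a(0)=c$ and $\tilde a'(0)=-\cos A$, so $F(0)=0$ and $F'(0)=0$; the goal is $F\ge 0$ on $[0,\infty)$. For the bookkeeping it is convenient to rewrite the identity, via $\cosh t-1=2\sinh^2(t/2)$ and $1-\cos A=2\sin^2(A/2)$, as
\[
\sinh^2\!\Big(\tfrac{\mu\tilde a}{2}\Big)=\sinh^2\!\Big(\tfrac{\mu|b-c|}{2}\Big)+\sinh(\mu b)\sinh(\mu c)\sin^2\!\Big(\tfrac{A}{2}\Big),
\]
which isolates the Euclidean term $(b-c)^2$ and makes clear that the curvature correction must be absorbed into the genuinely positive slack $(\zeta(\kappa,c)-1)\,b^2$. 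From here I would bound $\mu\tilde a\le 2\operatorname{arcsinh}(\cdot)$ and use the elementary facts that $\sinh t\ge t$, that $t\mapsto t/\tanh t$ is increasing (so $\zeta(\kappa,\cdot)\ge 1$), and that $\operatorname{arcsinh}$ is concave on $[0,\infty)$ — splitting into the cases $\cos A\le 0$ and $\cos A>0$ if needed — to show that $\tilde a(b)^2$ grows no faster in $b$ than the right-hand side.

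The main obstacle is precisely this analytic estimate. The crude inequality $\cosh t\ge 1+t^2/2$ is far too lossy once $b$ or $c$ is large (it would force an exponential rather than quadratic right-hand side), so one must genuinely exploit the product structure of the hyperbolic law of cosines together with the fact that for $\kappa<0$ the factor $\zeta(\kappa,c)$ is strictly larger than $1$ and thereby supplies, uniformly in $b$, the extra quadratic slack. Everything else — the appeal to the comparison theorem and the $\kappa=0$ limiting case — is standard.
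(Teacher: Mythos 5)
Your set-up is fine: for a space with curvature bounded below by $\kappa$, the hinge version of the Toponogov/Alexandrov comparison does give $a\le\tilde a$, where $\tilde a$ closes the model hinge in $M_\kappa^2$ with the same legs $b,c$ and included angle $A$; the $\kappa=0$ case is the Euclidean law of cosines; and your half-angle rewriting of the hyperbolic law of cosines is a correct identity. Note also that the paper offers no proof of this lemma --- it is quoted from \cite{zhang2016first} --- so the only proof to compare against is the one in that reference, whose entire content is the step you postpone.

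That step is a genuine gap, not a routine verification. The substance of the lemma is the implication
\begin{equation*}
\cosh(\mu\tilde a)=\cosh(\mu b)\cosh(\mu c)-\sinh(\mu b)\sinh(\mu c)\cos A
\ \Longrightarrow\
\tilde a^{\,2}\le \mu c\coth(\mu c)\,b^2+c^2-2bc\cos A,
\end{equation*}
and you establish only the boundary data $F(0)=F'(0)=0$ before listing tools you ``would'' use, conceding yourself that ``the main obstacle is precisely this analytic estimate.'' That concession is accurate, and the sketched tools do not obviously suffice: for instance, applying subadditivity of $t\mapsto\operatorname{arcsinh}\sqrt{t}$ to your identity gives $\mu\tilde a\le\mu|b-c|+2\operatorname{arcsinh}\bigl(\sqrt{\sinh(\mu b)\sinh(\mu c)}\,|\sin(A/2)|\bigr)$, and squaring produces a cross term $2|b-c|\cdot\tfrac{2}{\mu}\operatorname{arcsinh}(\cdots)$ that must be absorbed by the available slack $(\mu c\coth(\mu c)-1)b^2+2bc(1-\cos A)$ uniformly in $b$ --- exactly the delicate, non-crude estimate you correctly identify as necessary but do not supply. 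The proof in \cite{zhang2016first} consists precisely of carrying out this one-variable analysis (in effect establishing $\tfrac{d}{db}\,\tilde a^{\,2}\le 2\bigl(\mu c\coth(\mu c)\,b-c\cos A\bigr)$ for all $b\ge0$, which together with $F(0)=0$ yields the claim). Until an argument of that kind is written out in full, what you have is a correct reduction plus an unproven core inequality.
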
 

\section{Proof of Lemma \ref{lem:triangle}.}\label{sec:apendix-triangle}
Let $\tilde{x}_{k+1}\coloneqq \Exp_{x_k}(-\alpha_k g_{\eta,k^+}(x_k,u_k))$, and consider the geodesic triangle depicted in Figure \ref{fig:triangle} with vertices $x_{k^+}^\star, x_k,$ and $\tilde{x}_{k+1}$, and sides $a\coloneqq\mathrm{dist}(\tilde{x}_{k+1},x_{k^+}^\star)$, $b\coloneqq \mathrm{dist}(x_k,\tilde{x}_{k+1})$, and $c\coloneqq e_k = \mathrm{dist}(x_k,x_{k^+}^\star)=\norm{\Exp^{-1}_{x_k}(x_{k^+}^\star)}$.
For this triangle, we have that $\mathrm{dist}(x_k,\tilde{x}_{k+1})= \norm{\Exp^{-1}_{x_k}(\tilde{x}_{k+1})}=\alpha_k\norm{g_{\eta,k^+}(x_k,u_k)}$. In addition, we have that $$bc\cos(A)=\inner{-\alpha_kg_{\eta,k^+}(x_k,u_k),\Exp_{x_k}^{-1}(x_{k^+}^\star)}.$$	
Then, by Lemma \ref{lem:zhang},
\begin{align}\label{eq:zhang-proof}
	\mathrm{dist}(\tilde{x}_{k+1},x_{k^+}^\star)^2 \leq e_k^2 + 2\alpha_k \inner{g_{\eta,k^+}(x_k,u_k),\Exp^{-1}_{x_k}(x_{k^+}^\star)}  + \zeta(\kappa,e_k) \alpha_k^2 \norm{g_{\eta,k^+}(x_k,u_k)}^2 .
\end{align}
Lastly, note that by Lemma \ref{lem:nonexpansive}, $\mathrm{dist}(\tilde{x}_{k+1},x_{k^+}^\star)^2\geq \mathrm{dist}(x_{k+1},x_{k^+}^\star)^2=\bar{e}_k^2$, and thus the result follows immediately from \eqref{eq:zhang-proof}.

\begin{figure}
	\centering 
	\includegraphics[scale=0.5]{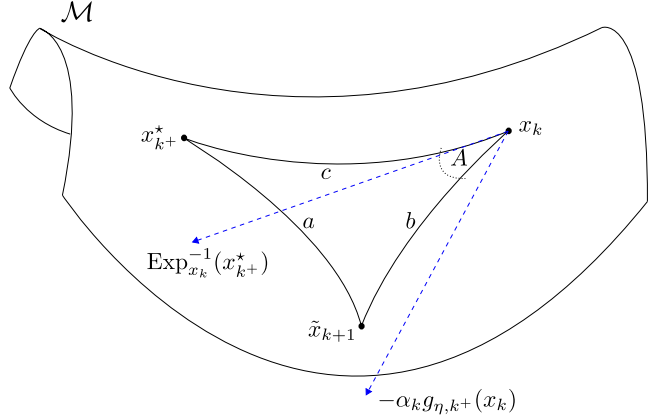}
	\caption{Illustration of the geodesic triangle used in Lemma \ref{lem:zhang}.}
	\label{fig:triangle}
\end{figure}

\section{Proof of Proposition \ref{propo:preliminaries}.}\label{sec:apendix-B}
\begin{enumerate}
	\item[(a)]  We complete the proof with the following steps. Essentially we want to quantify how well the expectation of the oracle approximates the real gradient. 
	\begin{align*}
		&\norm{ \espp{g_{\eta,k^+}(x,u)} - \grad f_{k^+}(x) } \\
		&\hspace{0.5cm}\stackrel{\text{Lemma \ref{lem:identities}(a)}}{=}\hspace{2mm} 
		\bigg\|  \frac{1}{\nu} \int_{\R^n} \bigg( 
		\frac{f_{k^+}(\Exp_x(\eta u)) - f_k(x)}{\eta}   - \inner{\grad  f_{k^+}(x),u } \bigg) u e^{-\frac{1}{2} \norm{u_0}^2}du_0 \bigg\| \\
		&\hspace{1.25cm}\leq \hspace{9mm} \frac{1}{\eta \nu} \int_{\R^n} \big|f_{k^+}(\Exp_x(\eta u)) - f_{k^+}(x) - \inner{\grad  f_{k^+}(x), \eta u } \\
		&\hspace{4cm} + f_{k^+}(x) - f_k(x)  \big| \norm{u} e^{-\frac{1}{2} \norm{u_0}^2}du_0 \\
		&\hspace{0.3cm}\stackrel{\text{Assum. \ref{assu:standing}(a),(c)}}{\leq} \frac{1}{\eta \nu} \int_{\R^n} \left(\frac{L\eta^2}{2}\norm{u}^2 + \delta \right)\norm{u} e^{-\frac{1}{2} \norm{u_0}^2}du_0 \\
		&\hspace{1.25cm}=\hspace{9mm} \frac{L\eta}{2\nu} \int_{\R^n} \norm{u}^3 e^{-\frac{1}{2} \norm{u_0}^2}du_0 + \frac{\delta}{\eta \nu }\int_{\R^n} \norm{u} e^{-\frac{1}{2}\norm{u_0}^2}du_0 \\
		&\hspace{0.5cm}\stackrel{\text{Lemma \ref{lem:identities}(b)}}{\leq}\hspace{2mm} \frac{L\eta}{2}(d+3)^{3/2} + \frac{\delta}{\eta} d^{1/2}.	
	\end{align*}
	\item[(b)]  To compute a bound on $\espp{\norm{g_{\eta,k^+}(x,u)}^2}$, we proceed by definition of $g_{\eta,k^+}(x,u)$, and thus first we bound $f_{k^+}(\Exp_x(\eta u)) - f_k(x)$.	
	We start by adding a convenient zero, that is, we add and subtract both $f_{k^+}(x)$ and $\inner{\grad f_{k^+}(x),\eta u}$, and then we use Assumptions \ref{assu:standing}(a) and \ref{assu:standing}\text{(c)}. That is,
	\begin{align*}
		( f_{k^+}(\Exp_x(\eta u)) - f_k(x) )^2 &= \big( f_{k^+}(\Exp_x(\eta u)) - f_{k^+}(x) - \inner{\grad f_{k^+}(x),\eta u} \\
		&\qquad + f_{k^+}(x) - f_k(x)
		+  \inner{\grad f_{k^+}(x),\eta u} \big)^2 \\
		&\leq \left( \frac{L\eta^2}{2}\norm{u}^2 + \delta + \inner{\grad f_{k^+}(x),\eta u} \right)^2 \\
		&\leq 2\left( \frac{L\eta^2}{2}\norm{u}^2 + \delta \right)^2 + 2\eta\inner{\grad f_{k^+}(x),u}^2.
	\end{align*}
\end{enumerate}
Consequently,
\begin{align*}
	\espp{\norm{g_{\eta,k^+}(x,u)}^2} &= \frac{(f_{k^+}(\Exp_x(\eta u)) - f_k(x))^2}{\eta^2}\espp{\norm{u}^2} \\ 
	&\leq \frac{L^2\eta^2}{2} \espp{\norm{u}^6} + 2L\delta \espp{\norm{u}^4} + \frac{2\delta^2}{\eta^2}\espp{\norm{u}^2}  + 2\espp{ \norm{\inner{\grad f_{k^+}(x),u} u}^2 }. 
\end{align*}
The proof is thus complete by means of Lemma \ref{lem:identities}(b),(c).

\section{Proof of Theorem \ref{theo:main}.}\label{sec:appendix-theo-main}
In this proof our main goal is to obtain \eqref{eq:theo1}. To achieve this, we note that \eqref{eq:2V} gives us a relation between $\espc{e_{k+1}}{x_k}$ and $\espc{\bar{e}_k}{x_k}$. Therefore, we first compute the following.
We take conditional expectations in \eqref{eq:cosine} and obtain
\begin{align}
	\espc{\bar{e}_k^2}{x_k} &\leq e_k^2 + 2\alpha_k \inner{\espc{g_{\eta,k^+}(x_k,u_k)}{x_k},\Exp^{-1}_{x_k}(x_{k^+}^\star)} \non \\
	&\qquad + \zeta(\kappa,e_k) \alpha_k^2 \espc{ \norm{g_{\eta,k^+}(x_k,u_k)}^2}{x_k} \non  \\
	&\leq e_k^2 + 2\alpha_k \inner{\espp{g_{\eta,k^+}(x_k,u_k)},\Exp^{-1}_{x_k}(x_{k^+}^\star)} \non \\
	&\qquad + \zeta(\kappa,e_k) \alpha_k^2 \bigg( \frac{L^2\eta^2}{2} (d+6)^3 + 2L\delta (d+4)^2 \non \\
	&\qquad + \frac{2\delta^2}{\eta^2} d + 2 (d+4)\norm{\grad f_{k^+}(x_k)}^2 \bigg), \label{eq:ebar}
\end{align}
where the last inequality follows from Proposition \ref{propo:preliminaries}(b).
Below, we focus on computing the term $\inner{\espp{g_{\eta,k^+}(x_k,u_k)},\Exp^{-1}_{x_k}(x_{k^+}^\star)}$ in \eqref{eq:ebar}. We add a convenient zero, in this case we add and subtract $\grad f_{k^+}(x_k)$
\begin{multline*}
	\inner{\espp{g_{\eta,k^+}(x_k,u_k)},\Exp^{-1}_{x_k}(x_{k^+}^\star)} = \inner{\espp{g_{\eta,k^+}(x_k,u_k)} - \grad f_{k^+}(x_k),\Exp^{-1}_{x_k}(x_{k^+}^\star)}\\
	+ \inner{ \grad f_{k^+}(x_k), \Exp^{-1}_{x_k}(x_{k^+}^\star)}.
\end{multline*}
Now, note that since $f$ is geodesically strongly convex as per Assumption \ref{assu:standing}(a), it is immediate to show that each $f$ satisfies
\begin{align}\label{eq:secant-inequality}
	-\inner{\grad f(x),\Exp_x^{-1}(x^\star)} \geq \frac{\sigma}{2} \mathrm{dist}(x,x^\star)^2,
\end{align}
for all $x\in\M$, where $x^\star \coloneqq \arg\min_{x\in\M} f(x)$. Inequality \eqref{eq:secant-inequality} is the Riemannian counterpart of the restricted secant inequality in $\R^n$.
Then, by the Cauchy-Scharwz inequality and \eqref{eq:secant-inequality},
\begin{align} 
	&\inner{\espp{g_{\eta,k^+}(x_k,u_k)},\Exp^{-1}_{x_k}(x_{k^+}^\star)} \non \\
	&\hspace{1cm}\leq   \norm{\espp{g_{\eta,k^+}(x_k,u_k)} - \grad f_{k^+}(x_k)}\norm{\Exp^{-1}_{x_k}(x_{k^+}^\star)}      - \frac{\sigma}{2} \mathrm{dist}(x_k,x_{k^+}^\star)^2 \non \\ 
	&\hspace{1cm}= \norm{\espp{g_{\eta,k^+}(x_k,u_k)} - \grad f_{k^+}(x_k)}e_k     - \frac{\sigma}{2} e_k^2 \non \\
	&\hspace{1cm}\leq \left(\frac{L\eta}{2}(d+3)^{3/2} + \frac{\delta}{\eta} d^{1/2}\right) e_k - \frac{\sigma}{2} e_k^2, \label{eq:inner}
\end{align}
where the last inequality follows from Proposition \ref{propo:preliminaries}(a).

There is now only one term in \eqref{eq:ebar} that remains to be bounded, which is $\norm{\grad f_{k^+}(x_k)}^2$. To that end, note that from \eqref{eq:L-smoothness-gradient} and the reverse triangle inequality, $\left| \norm{\grad f(x)} - \norm{\Gamma_y^x\grad f(y)} \right| \leq L \mathrm{dist}(x,y)$,
which, in turn, implies that $\norm{\grad f_{k^+}(x_k)} \leq L e_k$. Therefore, by using the latter together with \eqref{eq:inner} into \eqref{eq:ebar}, we obtain
\begin{align*}
	\espc{\bar{e}_k^2}{x_k} &\leq e_k^2 + 2\alpha_k \left(  \left(\frac{L\eta}{2}(d+3)^{3/2} + \frac{\delta}{\eta} d^{1/2}\right) e_k - \frac{\sigma}{2} e_k^2  \right)  \\
	&\qquad + \zeta(\kappa,e_k)\alpha_k^2 \bigg( \frac{L^2\eta^2}{2} (d+6)^3 + 2L\delta (d+4)^2 + \frac{2\delta^2}{\eta^2} d + 2 (d+4)L^2e_k^2 \bigg) \\
	&= \left( 2(d+4)L^2\zeta(\kappa,e_k)\alpha_k^2 - \sigma \alpha_k + 1\right) e_k^2 \\
	&\qquad + \alpha_k \left(L\eta(d+3)^{3/2} + \frac{2\delta}{\eta} d^{1/2}\right) e_k \\
	&\qquad + \left( \frac{L^2\eta^2}{2} (d+6)^3 + 2L\delta (d+4)^2 + \frac{2\delta^2}{\eta^2} d \right) \zeta(\kappa,e_k)\alpha_k^2 .
\end{align*}

By Jensen's inequality we get $\espc{\bar{e}_k}{x_k}^2 \leq \espc{\bar{e}_k^2}{x_k}$, and the proof is thus complete from applying \eqref{eq:2V}.

\section{Proof of Corollary \ref{coro:delta}.}\label{sec:appendix-C}
The first part of the proof consists in simplifying the expression for $\psi(e_k)$ that comes from Theorem \ref{theo:main} given the choice of constant step-size and also the bound on $\zeta$.
Note that Assumption \ref{assu:standing}(e) implies that $\zeta(\kappa,e_k)\leq \zeta(\kappa,R)$. Then, for $\alpha_k=\alpha$,
\begin{multline*}
	\psi(e_k) = \underbrace{ \left( 2(d+4)L^2\zeta(\kappa,R)\alpha^2 - \sigma \alpha + 1\right) }_{\coloneqq a}e_k^2 \\
	+ \underbrace{ \alpha \left(L\eta(d+3)^{3/2} + \frac{2\delta}{\eta} d^{1/2}\right)}_{\coloneqq b} e_k \\
	+ \underbrace{ \left( \frac{L^2\eta^2}{2} (d+6)^3 + 2L\delta (d+4)^2 + \frac{2\delta^2}{\eta^2} d \right) \zeta(\kappa,R)\alpha^2 }_{\coloneqq c}.
\end{multline*}
The second part of the proof boils down to using the above simplified expression for $\psi(e_k)$ to obtain a recursive expression for $\espp{e_k}$ so that we can iterate it and compute \eqref{eq:limit}. Recall from the proof of Theorem \ref{theo:main} that $\espc{\bar{e}_k}{x_k}^2 \leq \psi(e_k) = a e_k^2 + b e_k + c$. Therefore, Lemma \ref{lem:quadratic} implies $\espc{\bar{e}_k}{x_k} \leq \rho e_k + D$. We then use the latter inequality in \eqref{eq:2V} to get $\espc{e_{k+1}}{x_k}\leq \rho e_k + D + 2V$. Applying expectation then leads to the following recursive equation for $\espp{e_k}$,
\begin{align}\label{eq:ek+1}
	\espp{e_{k+1}}  \leq \rho \espp{e_k}  + D + 2V.
\end{align}
Since $\rho<1$ for $0 < \alpha < \frac{\sigma}{2L^2(d+4)\zeta(\kappa,R)}$, we can iterate \eqref{eq:ek+1} and obtain \eqref{eq:limit} in the limit $k\to\infty$, concluding the proof.

\section{Proof of Theorem \ref{theo:optimal}.}\label{sec:appendix-D}
The first part of the proof consists in showing that $\theta_2>\theta_1$, which will allow us to write $D=\alpha\max\{\theta_1,\theta_2\}=\alpha\theta_2$.
Note that $\rho^2 = 2(d+4)L^2\zeta(\kappa,R)\alpha^2 - \sigma\alpha + 1 > \frac{L^2}{2}\alpha^2 - \sigma\alpha + 1 \geq \frac{\sigma^2}{2}\alpha^2 - \sigma \alpha + 1$,
where the last inequality follows from Assumption \ref{assu:standing}(a). Then, $2\rho^2 > (\sigma\alpha)^2 - 2\sigma\alpha + 2 = (\sigma\alpha - 1)^2 + 1 \geq 1$,
which implies that $2\rho >\sqrt{2}$. Therefore,
\begin{align*}
	\theta_1 = \frac{L\eta(d+3)^{3/2}+(2/\eta)\delta d^{1/2}}{2\rho} <  \frac{L\eta(d+3)^{3/2}+(2/\eta)\delta d^{1/2}}{\sqrt{2}},
\end{align*}
and thus
\begin{align}\label{eq:theta_1}
	\theta_1^2 < \frac{L^2\eta^2}{2}(d+3)^3 + 2L\delta d^{1/2}(d+3)^{3/2} + \frac{2\delta^2}{\eta^2}d .
\end{align}
On the other hand,
\begin{align*}
	\theta_2^2 =  \frac{L^2\eta^2}{2} \zeta(\kappa,R) (d+6)^3 + 2L\delta \zeta(\kappa,R) (d+4)^2 + \frac{2\delta^2}{\eta^2} \zeta(\kappa,R)  d,
\end{align*}
and note that, by definition, $\zeta(\kappa,R)\geq 1$ for all $\kappa$ and $R$. We also note that $(d+4)^2 > d^{1/2}(d+3)^{3/2}$ for all $d\geq 0$.	Hence,
\begin{align*}
	\theta_2^2 &\geq \frac{L^2\eta^2}{2} (d+6)^3 + 2L\delta (d+4)^2 + \frac{2\delta^2}{\eta^2}  d \\
	&>  \frac{L^2\eta^2}{2} (d+3)^3 + 2L\delta d^{1/2}(d+3)^{3/2} + \frac{2\delta^2}{\eta^2}  d \\
	&\stackrel{\text{\eqref{eq:theta_1}}}{>} \theta_1^2,
\end{align*}
which implies that $D = \alpha\max\{\theta_1,\theta_2\} = \alpha \theta_2$, and thus $\Delta = \frac{\alpha \theta_2 + 2V}{1 - \rho}$. 

Now that we have an expression for $\Delta$, the second part of the proof consists in computing $\alpha$ and $\eta$ that minimise $\Delta$. We note that $\Delta$ depends on $\eta$ through $\theta_2$ only, and it depends on $\alpha$ through $\rho$ only. We can see that 
\begin{align*}
	\frac{\partial \theta_2}{\partial \eta} = 0 \Longrightarrow \zeta(\kappa,R)\left( L^2(d+6)^3\bar{\eta} - \frac{4\delta^2 d}{\bar{\eta}^3} \right) = 0,
\end{align*}
from which we conclude that $\bar{\eta} = \left( 4\delta^2d/(L^2(d+6)^3) \right)^{1/4}$ minimises $\theta_2$, and since the denominator of $\Delta$ is independent of $\eta$, $\Delta$ attains its minimum at $\bar{\eta}$.

Next, we obtain $\bar{\alpha}$ that minimises $\Delta$. To that end, we compute the derivative of $\Delta$ evaluated at $\bar{\eta}$ with respect to $\alpha$ and set it to be equal to zero. Note that
\begin{align*}
	\frac{\partial \Delta|_{\eta = \bar{\eta}}}{\partial \alpha} = \frac{\partial}{\partial \alpha}\left(  \frac{\alpha \bar{\theta} + 2V}{1- \sqrt{2(d+4)L^2\zeta(\kappa,R)\alpha^2 - \sigma \alpha + 1}} \right) = 0
\end{align*}
implies
\begin{align*}
	2\bar{\theta} \sqrt{2 L^2 (d + 4) \zeta(\kappa,R)\alpha^2  - \sigma \alpha + 1}  + 8L^2V\zeta(\kappa,R)(d+4)\alpha + \sigma \bar{\theta} \alpha - 2 \sigma V - 2 \bar{\theta} = 0,
\end{align*}
and then
\begin{align}\label{eq:ABC}
	\left( (8L^2V\zeta(\kappa,R)(d+4) + \sigma \bar{\theta}) \alpha - (2 \sigma V + 2 \bar{\theta}) \right)^2 = 4\bar{\theta}^2 \left( 2 L^2 (d + 4) \zeta(\kappa,R)\alpha^2  - \sigma \alpha + 1 \right).
\end{align}
Lastly, we simply group terms in \eqref{eq:ABC} to write $A\alpha^2+B\alpha+C=0$ with $A,B$ and $C$ as per the theorem statement, completing the proof.

\section{Proof of Theorem \ref{theo:regret}.}\label{sec:appendix-regret-1}
To prove Theorem \ref{theo:regret} we need the following two intermediate lemmas.
\begin{lemma}\label{lem:regret-tracking}
	Let $\rho_k \coloneqq \sqrt{2(d+4)L^2\zeta(\kappa,R)\alpha_k^2 - \sigma \alpha_k + 1}$, and suppose $\alpha_k\in (0,\sigma/(2L^2(d+4)\zeta(\kappa,R)))$ for all $k\in\N_0$. Then,
	\begin{align*}
		\sum_{k=0}^{T} \espp{e_k} \leq \frac{1}{1-\max\{\rho_0,\rho_T\}} \left( \espp{e_0} - \rho_T \espp{e_T} + \sum_{k=0}^{T-1} D_{k} + V_T \right),
	\end{align*}
	where $V_T\coloneqq \sum_{k=0}^{T-1} \mathrm{dist}(x_{k^+}^\star,x_{k^++1}^\star)$, and $D_k\coloneqq \alpha_k \sqrt{\left( \frac{L^2\eta_k^2}{2} (d+6)^3 + 2L\delta (d+4)^2 + \frac{2\delta^2}{\eta_k^2} d \right) \zeta}$.
\end{lemma}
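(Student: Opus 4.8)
The plan is to start from the one-step recursion already established in the proof of Corollary \ref{coro:delta}, but keeping the step size and oracle precision index-dependent. Specifically, retracing the argument behind \eqref{eq:ek+1} with $\alpha_k$ and $\eta_k$ in place of the constant $\alpha,\eta$, and using the bound $\zeta(\kappa,e_k)\le\zeta(\kappa,R)\eqqcolon\zeta$ from Assumption \ref{assu:standing}(e), Theorem \ref{theo:main} and Lemma \ref{lem:quadratic} give
\begin{align*}
	\espc{\bar e_k}{x_k} \le \rho_k e_k + D_k,
\end{align*}
where $D_k\coloneqq \alpha_k\sqrt{(\tfrac{L^2\eta_k^2}{2}(d+6)^3 + 2L\delta(d+4)^2 + \tfrac{2\delta^2}{\eta_k^2}d)\zeta}$ is exactly the ``$\theta_2$-type'' term and the ``$\theta_1$-type'' term is dominated by it (by the same computation as in the proof of Theorem \ref{theo:optimal}, where $\theta_2>\theta_1$; here one only needs $\max\{\theta_1,\theta_2\}=\theta_2$ so that $D_k$ has the stated closed form). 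Then, instead of invoking Assumption \ref{assu:standing}(b) to replace the minimiser drift by $2V$, I keep it exact: by the triangle inequality for the geodesic distance, $e_{k+1}\le \bar e_k + \mathrm{dist}(x_{k^+}^\star,x_{k^++1}^\star)$, so after taking full expectations,
\begin{align*}
	\espp{e_{k+1}} \le \rho_k \espp{e_k} + D_k + \mathrm{dist}(x_{k^+}^\star,x_{k^++1}^\star).
\end{align*}

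Next I would sum this inequality over $k=0,\dots,T-1$. Writing $S\coloneqq\sum_{k=0}^{T}\espp{e_k}$, the left-hand side telescopes to $S-\espp{e_0}$, and on the right-hand side $\sum_{k=0}^{T-1}\rho_k\espp{e_k}$ needs to be controlled. The key estimate is to bound each $\rho_k$ by $\bar\rho\coloneqq\max\{\rho_0,\rho_T\}$; this is legitimate because the $\alpha_k$ (hence $\rho_k$) are piecewise constant and non-increasing across the dyadic blocks $T_k=2^m$, so the maximum of $\rho_k$ over $k\in\{0,\dots,T\}$ is attained at one of the endpoints — here I would argue $\rho_k\le\max\{\rho_0,\rho_T\}$ for all $k\le T$ using monotonicity of the schedule (the same reasoning that makes $\alpha_k$ decreasing in $m$). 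This yields $\sum_{k=0}^{T-1}\rho_k\espp{e_k}\le\bar\rho\sum_{k=0}^{T-1}\espp{e_k}=\bar\rho\,(S-\espp{e_T})$. Substituting and recognising $\sum_{k=0}^{T-1}\mathrm{dist}(x_{k^+}^\star,x_{k^++1}^\star)=V_T$ gives
\begin{align*}
	S - \espp{e_0} \le \bar\rho\,(S-\espp{e_T}) + \sum_{k=0}^{T-1} D_k + V_T,
\end{align*}
and solving for $S$ (using $\bar\rho<1$, which holds since each $\alpha_k$ lies in the prescribed interval, so $1-\bar\rho>0$) produces exactly the claimed bound
\begin{align*}
	\sum_{k=0}^{T}\espp{e_k} \le \frac{1}{1-\bar\rho}\Big(\espp{e_0} - \bar\rho\,\espp{e_T} + \sum_{k=0}^{T-1} D_k + V_T\Big).
\end{align*}

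I expect the main obstacle to be the step where $\rho_k$ is replaced by $\max\{\rho_0,\rho_T\}$ uniformly in $k$: this requires knowing that the dyadic step-size schedule in \eqref{eq:alpha-eta} makes $\alpha_k$ (equivalently $\rho_k$) monotone across blocks, and one must be slightly careful because $\rho_k$ is a non-monotone quadratic in $\alpha_k$ — so the argument is really that the range of $\alpha_k$ over $k\le T$ is an interval with endpoints $\alpha_0$ and $\alpha_T$, on which $\rho$ is monotone (it is decreasing on $(0,\sigma/(4L^2(d+4)\zeta))$, which contains the admissible interval), hence $\rho_k$ stays between $\rho_0$ and $\rho_T$. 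Everything else is bookkeeping: the reduction $\max\{\theta_1,\theta_2\}=\theta_2$ is a direct transcription of the corresponding step in Appendix \ref{sec:appendix-D}, and the telescoping and the final algebraic rearrangement are routine.
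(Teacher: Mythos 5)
Your proof follows essentially the same route as the paper: start from the per-step recursion inherited from the proof of Corollary~\ref{coro:delta} (with $\alpha_k,\eta_k$ in place of constants), sum and telescope, bound the intermediate $\rho_k$'s by $\max\{\rho_0,\rho_T\}$, and rearrange; the $\max\{\theta_1,\theta_2\}=\theta_2$ reduction is indeed the same one used in Appendix~\ref{sec:appendix-D}. The bound you obtain has $-\bar\rho\,\espp{e_T}$ in place of $-\rho_T\,\espp{e_T}$, which is (weakly) tighter since $\bar\rho\geq\rho_T$, so the stated inequality follows. One small slip in your justification of $\rho_k\le\max\{\rho_0,\rho_T\}$: the containment in your parenthetical is reversed, since $\sigma/(4L^2(d+4)\zeta)<\sigma/(2L^2(d+4)\zeta)$, so the admissible interval is \emph{not} contained in the region where $\rho$ is decreasing and $\rho$ need not be monotone on $[\min(\alpha_0,\alpha_T),\max(\alpha_0,\alpha_T)]$; the correct justification is that $\rho^2$ is a convex quadratic in $\alpha$, so its maximum over any closed subinterval is attained at an endpoint, whence $\rho_k\le\max\{\rho_0,\rho_T\}$ whenever $\alpha_k$ lies between $\alpha_0$ and $\alpha_T$. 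This is the same implicit assumption the paper leans on when it writes ``the last inequality follows from the definition of $\rho_k$'', and it is guaranteed by the dyadic schedule of Theorem~\ref{theo:regret}.
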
 
\begin{proof} 
	We know that $e_{k+1}\leq \bar{e}_k + \mathrm{dist}(x_{k^+}^\star,x_{k^++1}^\star)$, and thus
	$\espc{e_{k+1}}{x_k}\leq \espc{\bar{e}_k}{x_k} + \mathrm{dist}(x_{k^+}^\star,x_{k^++1}^\star)$. Note that this is a relaxed version of \eqref{eq:2V} since we do not use the change on minimiser bound from Assumption \ref{assu:standing}(b). Therefore, from the proof of Corollary \ref{coro:delta}, it is easy to see that for any $\alpha_k>0$,
	\begin{align}\label{eq:error-reg}
		\espp{e_{k+1}} \leq \rho_k \espp{e_k} + D_k +  \mathrm{dist}(x_{k^+}^\star,x_{k^++1}^\star).
	\end{align}
	Summing both sides of \eqref{eq:error-reg} over and adding $\espp{e_0}$ to both sides,
	\begin{align*}
		\sum_{k=0}^{T} \espp{e_k} &\leq \espp{e_0} + \sum_{k=1}^{T}\rho_{k-1} \espp{e_{k-1}} + \sum_{k=1}^{T} D_{k-1} + \sum_{k=1}^{T} \mathrm{dist}(x_{k^+-1}^\star,x_{k^+}^\star) \\
		&= \espp{e_0} + \sum_{k=0}^{T-1} \rho_k \espp{e_k} + \sum_{k=0}^{T-1} D_{k} + \sum_{k=0}^{T-1} \mathrm{dist}(x_{k^+}^\star,x_{k^++1}^\star) \\
		&= \espp{e_0} - \rho_T \espp{e_T} +  \sum_{k=0}^{T} \rho_k \espp{e_k} + \sum_{k=0}^{T-1} D_{k} + \sum_{k=0}^{T-1} \mathrm{dist}(x_{k^+}^\star,x_{k^++1}^\star) \\
		&\hspace{-5mm}\leq  \espp{e_0} - \rho_T \espp{e_T} +  \max\{\rho_{0},\rho_T\} \sum_{k=0}^{T}  \espp{e_k} + \sum_{k=0}^{T-1} D_{k} + \sum_{k=0}^{T-1} \mathrm{dist}(x_{k^+}^\star,x_{k^++1}^\star) ,
		%
	\end{align*}
	where the last inequality follows from the definition of $\rho_k$, and the proof is complete by noting that $\max\{\rho_0,\rho_T\}<1$ since $\alpha_k\in (0,\sigma/(2L^2(d+4)\zeta(\kappa,R)))$. 
\end{proof} 

\begin{lemma}\label{lem:regret-estimation}
	Let $\rho_k \coloneqq \sqrt{2(d+4)L^2\zeta(\kappa,R)\alpha_k^2 - \sigma \alpha_k + 1}$, and suppose $\alpha_k\in (0,\sigma/(2L^2(d+4)\zeta(\kappa,R)))$ for all $k\in\N_0$. Then,
	\begin{align*}
		\sum_{k=0}^{T} \espp{\bar{e}_k} \leq \frac{1}{1-\max\{\rho_1,\rho_{T+1}\}}\left(  \espp{\bar{e}_0} - \rho_{T+1} \espp{\bar{e}_T} + \sum_{k=1}^{T} D_k + \max\{\rho_1,\rho_{T+1}\} V_T\right),
	\end{align*}
	where $V_T\coloneqq \sum_{k=0}^{T-1} \mathrm{dist}(x_{k^+}^\star,x_{k^++1}^\star)$.
\end{lemma}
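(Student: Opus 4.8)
The plan is to follow the blueprint of Lemma~\ref{lem:regret-tracking} verbatim, but to propagate the \emph{estimation} error $\bar{e}_k$ directly instead of the tracking error. First I would recall, as in the proof of Corollary~\ref{coro:delta}, that the quadratic bound from the proof of Theorem~\ref{theo:main} together with Lemma~\ref{lem:quadratic} gives, for any $\alpha_k>0$,
\begin{equation*}
	\espc{\bar{e}_k}{x_k}\le \rho_k e_k + D_k,\qquad\text{hence}\qquad \espp{\bar{e}_k}\le \rho_k\espp{e_k}+D_k,
\end{equation*}
where $\rho_k^2=2(d+4)L^2\zeta(\kappa,R)\alpha_k^2-\sigma\alpha_k+1$ is precisely the coefficient of $e_k^2$ in $\psi$ and $D_k$ is as in Lemma~\ref{lem:regret-tracking}. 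Next I would use the triangle inequality for the geodesic distance, exactly as in the derivation of \eqref{eq:2V} but \emph{without} invoking Assumption~\ref{assu:standing}(b): for every $k\ge 1$,
\begin{equation*}
	e_k=\mathrm{dist}(x_k,x_{k^+}^\star)\le \mathrm{dist}(x_k,x_{(k-1)^+}^\star)+\mathrm{dist}(x_{(k-1)^+}^\star,x_{k^+}^\star)=\bar{e}_{k-1}+\mathrm{dist}(x_{(k-1)^+}^\star,x_{(k-1)^++1}^\star),
\end{equation*}
using $k^+=(k-1)^++1$. Combining the two displays yields the per-step recursion
\begin{equation*}
	\espp{\bar{e}_k}\le \rho_k\espp{\bar{e}_{k-1}}+\rho_k\,\mathrm{dist}(x_{(k-1)^+}^\star,x_{(k-1)^++1}^\star)+D_k,\qquad k\ge 1,
\end{equation*}
which is the $\bar{e}$-analogue of \eqref{eq:error-reg}.

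I would then sum this recursion over $k=1,\dots,T$ and add $\espp{\bar{e}_0}$ to both sides, so that the left-hand side becomes $\sum_{k=0}^{T}\espp{\bar{e}_k}$. Re-indexing $j=k-1$ turns $\sum_{k=1}^{T}\mathrm{dist}(x_{(k-1)^+}^\star,x_{(k-1)^++1}^\star)$ into $V_T=\sum_{j=0}^{T-1}\mathrm{dist}(x_{j^+}^\star,x_{j^++1}^\star)$, turns $\sum_{k=1}^{T}D_k$ into itself, and turns $\sum_{k=1}^{T}\rho_k\espp{\bar{e}_{k-1}}$ into $\sum_{j=0}^{T-1}\rho_{j+1}\espp{\bar{e}_j}=\sum_{j=0}^{T}\rho_{j+1}\espp{\bar{e}_j}-\rho_{T+1}\espp{\bar{e}_T}$; this last step is exactly where the index $\rho_{T+1}$ enters and where the ``$-\rho_{T+1}\espp{\bar{e}_T}$'' term in the statement comes from. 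As in Lemma~\ref{lem:regret-tracking}, I would then bound $\rho_{j+1}\le\max\{\rho_1,\rho_{T+1}\}$ uniformly in $j\in\{0,\dots,T\}$: since $\alpha\mapsto\rho(\alpha)^2$ is a convex quadratic and the dyadic step-size schedule is taken monotone in the relevant range, its maximum over the step sizes $\alpha_1,\dots,\alpha_{T+1}$ is attained at an endpoint, so $\rho_k\le\max\{\rho_1,\rho_{T+1}\}$ for $k\in\{1,\dots,T+1\}$; the same bound handles the distance sum, giving $\sum_{k=1}^{T}\rho_k\,\mathrm{dist}(x_{(k-1)^+}^\star,x_{(k-1)^++1}^\star)\le\max\{\rho_1,\rho_{T+1}\}\,V_T$.

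Assembling these gives
\begin{equation*}
	\sum_{k=0}^{T}\espp{\bar{e}_k}\le \espp{\bar{e}_0}-\rho_{T+1}\espp{\bar{e}_T}+\max\{\rho_1,\rho_{T+1}\}\sum_{k=0}^{T}\espp{\bar{e}_k}+\sum_{k=1}^{T}D_k+\max\{\rho_1,\rho_{T+1}\}\,V_T,
\end{equation*}
and since $\alpha_k\in(0,\sigma/(2L^2(d+4)\zeta(\kappa,R)))$ forces $\max\{\rho_1,\rho_{T+1}\}<1$, I would move the $\max\{\rho_1,\rho_{T+1}\}\sum_{k=0}^{T}\espp{\bar{e}_k}$ term to the left-hand side and divide by $1-\max\{\rho_1,\rho_{T+1}\}$ to obtain the claim. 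The main obstacle is not any single inequality — each one mirrors Lemma~\ref{lem:regret-tracking} — but the bookkeeping: aligning the half-integer indices $(k-1)^+$ and $k^+$ so that the telescoped distance sum is \emph{exactly} $V_T$ and the leftover term is \emph{exactly} $\rho_{T+1}\espp{\bar{e}_T}$, and correctly justifying the uniform replacement $\rho_{j+1}\le\max\{\rho_1,\rho_{T+1}\}$ from convexity of $\rho(\cdot)^2$ together with the monotonicity of the schedule.
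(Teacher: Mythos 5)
Your proof is correct and follows essentially the same route as the paper: you start from $\espp{\bar{e}_k}\le\rho_k\espp{e_k}+D_k$, use the geodesic triangle inequality $e_k\le\bar{e}_{k-1}+\mathrm{dist}(x_{k^+-1}^\star,x_{k^+}^\star)$ (your $(k-1)^+=k^+-1$), sum, re-index to extract $-\rho_{T+1}\espp{\bar e_T}$, and uniformly bound $\rho_{k}\le\max\{\rho_1,\rho_{T+1}\}$ before solving for the sum. If anything, you are slightly more explicit than the paper in justifying the uniform bound $\rho_{j+1}\le\max\{\rho_1,\rho_{T+1}\}$ via convexity of $\alpha\mapsto\rho(\alpha)^2$ and monotonicity of the schedule, whereas the paper simply asserts it "follows from the definition of $\rho_k$".
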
 
\begin{proof}	
	From the proof of Corollary \ref{coro:delta}, we can conclude that $\espp{\bar{e}_k} \leq \rho_k \espp{e_k} + D_k$. Then, by the triangle inequality of the Riemannian distance, we can write $e_k = \mathrm{dist}(x_k,x_{k^+}^\star) \leq \mathrm{dist}(x_k,x_{k^+-1}^\star) + \mathrm{dist}(x_{k^+-1}^\star,x_{k^+}^\star)$. Therefore,
	\begin{align*}
		\espp{\bar{e}_k} \leq \rho_k \espp{ \bar{e}_{k-1} } + \rho_k\, \mathrm{dist}(x_{k^+-1}^\star,x_{k^+}^\star)  + D_k.
	\end{align*}
	We proceed similarly to the proof of Lemma \ref{lem:regret-tracking}, that is,
	\begin{align*}
		\sum_{k=0}^{T} \espp{\bar{e}_k} &\leq \espp{\bar{e}_0} + \sum_{k=1}^{T} \rho_k \espp{\bar{e}_{k-1}} +  \sum_{k=1}^{T} \rho_k\, \mathrm{dist}(x_{k^+-1}^\star,x_{k^+}^\star)  + \sum_{k=1}^{T} D_k \\
		&\leq \espp{\bar{e}_0} + \sum_{k=0}^{T-1} \rho_{k+1} \espp{\bar{e}_{k}} +  \sum_{k=0}^{T-1} \rho_{k+1}\, \mathrm{dist}(x_{k^+}^\star,x_{k^+ +1}^\star)  + \sum_{k=1}^{T} D_k \\
		&= \espp{\bar{e}_0} - \rho_{T+1} \espp{\bar{e}_T} + \sum_{k=0}^{T} \rho_{k+1} \espp{\bar{e}_{k}}  +  \sum_{k=0}^{T-1} \rho_{k+1} \mathrm{dist}(x_{k^+}^\star,x_{k^++1}^\star)  + \sum_{k=1}^{T} D_k \\
		&\leq \espp{\bar{e}_0} - \rho_{T+1} \espp{\bar{e}_T} + \max\{\rho_1,\rho_{T+1}\}\sum_{k=0}^{T}  \espp{\bar{e}_{k}}\\
		&\qquad  +  \max\{\rho_1,\rho_{T+1}\}\sum_{k=0}^{T-1}  \mathrm{dist}(x_{k^+}^\star,x_{k^++1}^\star)  + \sum_{k=1}^{T} D_k,
	\end{align*}
	completing the proof.
\end{proof}

Now we can proceed with the proof of Theorem \ref{theo:regret}. Note that the geodesic strong convexity of $f_{k^+}$, the Cauchy-Schwarz inequality, and Assumption \ref{assu:gradient} imply
\begin{align*}
	f_{k^+}(x_k) - f_{k^+}(x_{k^+}^\star) &\leq -\inner{\grad f_{k^+}(x_k),\Exp_{x_k}^{-1}(x_{k^+}^\star)} \\
	&\leq \norm{\grad f_{k^+}(x_k)} e_k \\
	&\leq G e_k \\
	\espp{f_{k^+}(x_k)} - f_{k^+}(x_{k^+}^\star) &\leq G\espp{e_k}.
\end{align*}
Similarly, it is immediate to show that $\espp{f_{k^+}(x_{k+1})} - f_{k^+}(x_{k^+}^\star)\leq G \espp{\bar{e}_k}$. Therefore, by means of Lemmas \ref{lem:regret-tracking}, \ref{lem:regret-estimation}, and the regret definitions in \eqref{eq:regret-def}, we can immediately obtain the following upper bounds on $\reg_T^{\textrm{Track.}}$ and $\reg_T^{\textrm{Est.}}$,
\begin{align*}
	\reg_T^{\textrm{Track.}} &\leq \frac{G}{1-\max\{\rho_0,\rho_T\}} \left( \espp{e_0} - \rho_T \espp{e_T} + \sum_{k=0}^{T-1} D_{k} + V_T \right),\\
	\reg_T^{\textrm{Est.}} &\leq \frac{G}{1-\max\{\rho_1,\rho_{T+1}\}}\left(  \espp{\bar{e}_0} - \rho_{T+1} \espp{\bar{e}_T} + \sum_{k=1}^{T} D_k + \max\{\rho_1,\rho_{T+1}\} V_T\right).
\end{align*}

The next step of the proof follows by using the so-called \emph{doubling-trick} from \cite[Section 2.3.1]{shalev2011online}, which divides the algorithm rounds into periods of increasing size, specifically, in periods of $2^m$ rounds, $m\in\N_0$. Formally, this allows us to write, for any $T\geq 1$,
\begin{align*}
	\sum_{k=0}^{T-1}  D_k \leq \sum_{m=0}^{\lceil \log_2(T)\rceil} \sum_{k=2^m-1}^{2^{m+1}-2} D_k \ .
\end{align*}
Then, if for each period of $2^m$ rounds we can find a step size and oracle's precision such that $D_k \leq \bar{c}/\sqrt{2^m}$ for some $\bar{c}>0$, $k\in [2^m-1,2^{m+1}-2]$, $m\in\N_0$, then we would have
\begin{align*}
	\sum_{k=0}^{T-1}  D_k \leq \sum_{m=0}^{\lceil \log_2(T)\rceil} \sum_{k=2^m-1}^{2^{m+1}-2} D_k 
	&\leq \sum_{m=0}^{\lceil \log_2(T)\rceil} \bar{c} \sqrt{2^m} \\
	&= \bar{c}\ \frac{1 - (\sqrt{2})^{\lceil \log_2(T) \rceil+1}}{1-\sqrt{2}} \\
	&\leq \bar{c}\ \frac{1-\sqrt{2T}}{1-\sqrt{2}} \\
	&\leq \frac{\bar{c}\sqrt{2}}{1-\sqrt{2}}\sqrt{T},
\end{align*}
which would prove the regret bound for $\reg_T^{\textrm{Track.}}$ in \eqref{eq:regret-track}. We can also bound $\sum_{k=1}^{T} D_k$ exactly as above to prove $\reg_T^{\textrm{Est.}}$ in \eqref{eq:regret-est}. Therefore, to conclude the proof, we have to show that the choices of $\alpha_k$ and $\eta_k$ in the theorem statement indeed imply $D_k \leq \bar{c}/\sqrt{2^m}$ in each period of length $2^m$, which is what we do below.

Fix a period of length $2^m$, that is, consider $T_k=2^m$ for $k\in[2^m-1,2^{m+1}-2]$, $m\in\N_0$. Recall that $D_k\coloneqq \alpha_k \sqrt{\left( \frac{L^2\eta_k^2}{2} (d+6)^3 + 2L\delta (d+4)^2 + \frac{2\delta^2}{\eta_k^2} d \right) \zeta}$. Then, in each period, $D_k \leq \bar{c}/\sqrt{T_k}$ implies
\begin{align}\label{eq:eta-eqn}
	\underbrace{\frac{L^2(d+6)^3\zeta}{2}}_{\mathcal{A}\geq 0}{\eta_k^4} + \underbrace{ \left(2L\delta(d+4)^2\zeta - \frac{\bar{c}^2}{\alpha_k^2 T_k}\right)}_{\mathcal{B}_k} {\eta_k^2} + \underbrace{ 2\delta^2d\zeta}_{\mathcal{C} > 0} \leq 0,
\end{align}
which is a quadratic inequality on $\eta_k^2$. The solutions to $\mathcal{A} \eta_k^4 + \mathcal{B}_k \eta_k^2 + \mathcal{C} = 0$ are 
\begin{align*}
	\mathbf{x}_{1} = \frac{-\mathcal{B}_k + (\mathcal{B}_k^2 - 4 \mathcal{A} \mathcal{C})^{\frac{1}{2}}}{2\mathcal{A}}, \quad 
	\mathbf{x}_{2} = \frac{-\mathcal{B}_k - (\mathcal{B}_k^2 - 4 \mathcal{A} \mathcal{C})^{\frac{1}{2}}}{2\mathcal{A}}. 
\end{align*}
Since $\mathcal{A}\geq 0$ and $\mathcal{C}>0$, a necessary condition such that $\mathbf{x}_{1},\mathbf{x}_{2}>0$ is that $\mathcal{B}_k < 0$, which implies that	
$\alpha_k < \sqrt{ \frac{\bar{c}^2}{2L\delta(d+4)^2\zeta(\kappa,R) T_k} }$.
We also need that $\Delta_k \coloneqq \mathcal{B}_k^2 - 4\mathcal{A} \mathcal{C} \geq 0$, which imposes and extra condition on the step size $\alpha_k$, that is, 
\begin{align*}
	0 \leq	\mathcal{B}_k^2 - 4\mathcal{A} \mathcal{C} &= \left(2L\delta(d+4)^2\zeta - \frac{\bar{c}^2}{\alpha_k^2 T_k}\right)^2 - 4L^2\delta^2d(d+6)^3\zeta^2 \\
	&= 4L^2\delta^2(d+4)^4\zeta^2 - 4L^2\delta^2d(d+6)^3 \zeta^2  - \frac{4L\delta(d+4)^2\zeta \bar{c}^2}{\alpha_k^2 T_k} + \frac{\bar{c}^4}{ \alpha_k^4 T_k^2 }
\end{align*}
which can be written as a quadratic equation on $\alpha^2_k$,
\begin{align}\label{eq:alpha-eqn}
	0\leq \underbrace{ \left[ 4L^2\delta^2(d+4)^4\zeta^2 - 4L^2\delta^2d(d+6)^3 \zeta^2 \right]}_{\bar{\mathcal{A}}}{\alpha_k^4}
	\underbrace{ - \frac{4L\delta(d+4)^2\zeta \bar{c}^2}{T_k} }_{\bar{\mathcal{B}}_k\leq 0}{\alpha_k^2} + \underbrace{ \frac{\bar{c}^4}{ T_k^2 } }_{\bar{\mathcal{C}}_k > 0}.
\end{align}
The solutions to $\bar{\mathcal{A}} \alpha_k^4 + \bar{\mathcal{B}} \alpha_k^2 + \bar{\mathcal{C}} = 0$ are
\begin{align*}
	\mathbf{y}_{1} = \frac{-\bar{\mathcal{B}}_k + (\bar{\mathcal{B}}_k^2 - 4 \bar{\mathcal{A}} \bar{\mathcal{C}}_k)^{\frac{1}{2}}}{2\bar{\mathcal{A}}}, \quad
	\mathbf{y}_{2} = \frac{-\bar{\mathcal{B}}_k - (\bar{\mathcal{B}}_k^2 - 4 \bar{\mathcal{A}} \bar{\mathcal{C}}_k)^{\frac{1}{2}}}{2\bar{\mathcal{A}}}.
\end{align*}
We analyse the determinant $\bar{\Delta}_k \coloneqq \bar{\mathcal{B}}_k^2 - 4\bar{\mathcal{A}} \bar{\mathcal{C}}_k$, that is,
\begin{align*}
	\bar{\Delta}_k &= \frac{16L^2\delta^2(d+4)^4\zeta^2 \bar{c}^4}{T_k^2} - 4\left[ 4L^2\delta^2(d+4)^4\zeta^2 - 4L^2\delta^2d(d+6)^3 \zeta^2 \right]\frac{\bar{c}^4}{ T_k^2 } \\
	&= \frac{ 16L^2\delta^2d(d+6)^3 \zeta^2\bar{c}^4 }{ T_k^2 },
\end{align*}
which is always non-negative. Additionally, note that $\bar{\mathcal{A}} \geq 0$ for $d < 4$, and $\bar{\mathcal{A}} \leq 0$ for $d \geq 5$. We will use this fact to write a closed-form choice for the step size $\alpha_k$ in each period $T_k$ of length $2^m$. 

\begin{itemize}
	\item 	For $d < 4$ ($\bar{\mathcal{A}} \geq 0$), we note that $\bar{\mathcal{B}}_k^2 - 4\bar{\mathcal{A}} \bar{\mathcal{C}}_k \leq \bar{\mathcal{B}}^2_k$ since $\bar{\mathcal{C}}_k>0$. This fact together with $\bar{\mathcal{B}}_k \leq 0$ implies that $\mathbf{y}_{1}\geq \mathbf{y}_{2} \geq 0$. Then, from \eqref{eq:alpha-eqn} $\alpha_k$ needs to satisfy $\bar{\mathcal{A}}(\alpha_k^2 - \mathbf{y}_{1})(\alpha_k^2 - \mathbf{y}_{2})\geq 0$,
	which holds for $\alpha_k \geq \sqrt{ \mathbf{y}_{1} }$ or $0<\alpha_k \leq \sqrt{ \mathbf{y}_{2} }$.
	\item For $d \geq 5$ ($\bar{\mathcal{A}} \leq 0$), we note that $\bar{\mathcal{B}}_k^2 - 4\bar{\mathcal{A}} \bar{\mathcal{C}}_k \geq \bar{\mathcal{B}}_k^2$ since $\bar{\mathcal{C}}_k>0$. Moreover, since $\bar{\mathcal{B}}_k \leq 0$, we have that $\mathbf{y}_{1}\leq 0$ and $\mathbf{y}_{2} \geq 0$. Therefore, from \eqref{eq:alpha-eqn}, $\bar{\mathcal{A}}(\alpha_k^2 - \mathbf{y}_{2})(\alpha_k^2 + |\mathbf{y}_{1}|)\geq 0$ which satisfied with $0<\alpha_k \leq \sqrt{\mathbf{y}_{2}}$.
\end{itemize}
In addition, we need that $\alpha_k < \frac{\sigma}{2L^2(d+4)\zeta(\kappa,R)}$ (to make $\rho_k <1$). Therefore, we choose
\begin{align*}
	\alpha_k < \min\left\{\sqrt{\mathbf{y}_{2}} , \frac{\sigma}{2L^2(d+4)\zeta(\kappa,R)} \right\}.
\end{align*}

Since this choice of $\alpha_k$ implies that $\Delta_k \geq 0$, then $\mathcal{A} \eta_k^4 + \mathcal{B}_k \eta_k^2 + \mathcal{C} = 0$ has two distinct positive real solutions $\mathbf{x}_{1}$ and $\mathbf{x}_{2}$. Then, from \eqref{eq:eta-eqn}, we recall that $\eta_k$ needs to be chosen such that $\mathcal{A} \eta_k^4 + \mathcal{B}_k \eta_k^2 + \mathcal{C} \leq 0 \Longleftrightarrow
\mathcal{A}(\eta_k^2 - \mathbf{x}_{1})(\eta_k^2 - \mathbf{x}_{2}) \leq 0$.
Since $\mathcal{A}\geq 0$ and $\mathcal{C} >0$, then $\mathcal{B}_k^2 - 4 \mathcal{A} \mathcal{C} \leq \mathcal{B}_k^2$ and $0\leq \mathbf{x}_{2}\leq \mathbf{x}_{1}$. Therefore, the choice of $\eta_k$ needs to satisfy
\begin{align*}
	\sqrt{ \frac{-\mathcal{B}_k - (\mathcal{B}_k^2 - 4 \mathcal{A} \mathcal{C})^{\frac{1}{2}}}{2\mathcal{A}} } \leq  \eta_k \leq \sqrt{ \frac{-\mathcal{B}_k + (\mathcal{B}_k^2 - 4 \mathcal{A} \mathcal{C})^{\frac{1}{2}}}{2\mathcal{A}} },
\end{align*}
concluding the proof.

\bibliography{bibliography}

\begin{thebibliography}{10}

\bibitem{absil2009optimization}
{\sc Absil, P.-A., Mahony, R., and Sepulchre, R.}
\newblock {\em Optimization algorithms on matrix manifolds}.
\newblock Princeton University Press, 2009.

\bibitem{ahmed2020combining}
{\sc Ahmed, M.~O., Vaswani, S., and Schmidt, M.}
\newblock Combining bayesian optimization and {L}ipschitz optimization.
\newblock {\em Machine Learning 109}, 1 (2020), 79--102.

\bibitem{arsigny2007geometric}
{\sc Arsigny, V., Fillard, P., Pennec, X., and Ayache, N.}
\newblock Geometric means in a novel vector space structure on symmetric
  positive-definite matrices.
\newblock {\em SIAM journal on matrix analysis and applications 29}, 1 (2007),
  328--347.

\bibitem{bacak2014convex}
{\sc Bac{\'a}k, M.}
\newblock {\em Convex analysis and optimization in Hadamard spaces}, vol.~22.
\newblock Walter de Gruyter GmbH \& Co KG, 2014.

\bibitem{berger2012panoramic}
{\sc Berger, M.}
\newblock {\em A panoramic view of Riemannian geometry}.
\newblock Springer Science \& Business Media, 2012.

\bibitem{besbes2015non}
{\sc Besbes, O., Gur, Y., and Zeevi, A.}
\newblock Non-stationary stochastic optimization.
\newblock {\em Operations research 63}, 5 (2015), 1227--1244.

\bibitem{bini2013computing}
{\sc Bini, D.~A., and Iannazzo, B.}
\newblock Computing the {K}archer mean of symmetric positive definite matrices.
\newblock {\em Linear Algebra and its Applications 438}, 4 (2013), 1700--1710.

\bibitem{bishop1969manifolds}
{\sc Bishop, R.~L., and O'Neill, B.}
\newblock Manifolds of negative curvature.
\newblock {\em Transactions of the American Mathematical Society 145\/} (1969),
  1--49.

\bibitem{bonnabel2013stochastic}
{\sc Bonnabel, S.}
\newblock Stochastic gradient descent on {R}iemannian manifolds.
\newblock {\em IEEE Transactions on Automatic Control 58}, 9 (2013),
  2217--2229.

\bibitem{bordin2018nonlinear}
{\sc Bordin~Jr, C.~J., de~Figueredo, C.~G., and Bruno, M.~G.}
\newblock Nonlinear state estimation on unit spheres using manifold particle
  filtering.
\newblock {\em Digital Signal Processing 81\/} (2018), 50--56.

\bibitem{boumal2019global}
{\sc Boumal, N., Absil, P.-A., and Cartis, C.}
\newblock Global rates of convergence for nonconvex optimization on manifolds.
\newblock {\em IMA Journal of Numerical Analysis 39}, 1 (2019), 1--33.

\bibitem{boumal2014manopt}
{\sc Boumal, N., Mishra, B., Absil, P.-A., and Sepulchre, R.}
\newblock Manopt, a {M}atlab toolbox for optimization on manifolds.
\newblock {\em The Journal of Machine Learning Research 15}, 1 (2014),
  1455--1459.

\bibitem{bouwmans2018applications}
{\sc Bouwmans, T., Javed, S., Zhang, H., Lin, Z., and Otazo, R.}
\newblock On the applications of robust pca in image and video processing.
\newblock {\em Proceedings of the IEEE 106}, 8 (2018), 1427--1457.

\bibitem{bubeck2012regret}
{\sc Bubeck, S., and Cesa-Bianchi, N.}
\newblock Regret analysis of stochastic and nonstochastic multi-armed bandit
  problems.
\newblock {\em arXiv preprint arXiv:1204.5721\/} (2012).

\bibitem{bunfra16}
{\sc Bunin, G.~A., and Fran{\c{c}}ois, G.}
\newblock Lipschitz constants in experimental optimization.
\newblock {\em arXiv preprint arXiv:1603.07847\/} (2016).

\bibitem{chattopadhyay2015derivative}
{\sc Chattopadhyay, A., Selvan, S.~E., and Amato, U.}
\newblock A derivative-free {R}iemannian {P}owell's method, minimizing
  hartley-entropy-based {ICA} contrast.
\newblock {\em IEEE transactions on neural networks and learning systems 27}, 9
  (2015), 1983--1990.

\bibitem{chen2017zoo}
{\sc Chen, P.-Y., Zhang, H., Sharma, Y., Yi, J., and Hsieh, C.-J.}
\newblock Zoo: Zeroth order optimization based black-box attacks to deep neural
  networks without training substitute models.
\newblock In {\em Proceedings of the 10th ACM Workshop on Artificial
  Intelligence and Security\/} (2017), pp.~15--26.

\bibitem{chen2019manifold}
{\sc Chen, S., Deng, Z., Ma, S., and So, A. M.-C.}
\newblock Manifold proximal point algorithms for dual principal component
  pursuit and orthogonal dictionary learning.
\newblock In {\em 2019 53rd Asilomar Conference on Signals, Systems, and
  Computers\/} (2019), IEEE, pp.~259--263.

\bibitem{chen2020proximal}
{\sc Chen, S., Ma, S., Man-Cho~So, A., and Zhang, T.}
\newblock Proximal gradient method for nonsmooth optimization over the
  {S}tiefel manifold.
\newblock {\em SIAM Journal on Optimization 30}, 1 (2020), 210--239.

\bibitem{chen2018bandit}
{\sc Chen, T., and Giannakis, G.~B.}
\newblock Bandit convex optimization for scalable and dynamic {I}o{T}
  management.
\newblock {\em IEEE Internet of Things Journal 6}, 1 (2018), 1276--1286.

\bibitem{chiang2013beating}
{\sc Chiang, C.-K., Lee, C.-J., and Lu, C.-J.}
\newblock Beating bandits in gradually evolving worlds.
\newblock In {\em Conference on Learning Theory\/} (2013), pp.~210--227.

\bibitem{derenick2009optimal}
{\sc Derenick, J., Spletzer, J., and Hsieh, A.}
\newblock An optimal approach to collaborative target tracking with performance
  guarantees.
\newblock {\em Journal of Intelligent and Robotic Systems 56}, 1 (2009),
  47--67.

\bibitem{dixit2019online}
{\sc Dixit, R., Bedi, A.~S., Tripathi, R., and Rajawat, K.}
\newblock Online learning with inexact proximal online gradient descent
  algorithms.
\newblock {\em IEEE Transactions on Signal Processing 67}, 5 (2019),
  1338--1352.

\bibitem{dong2014target}
{\sc Dong, G., and Kuang, G.}
\newblock Target recognition in {SAR} images via classification on {R}iemannian
  manifolds.
\newblock {\em IEEE Geoscience and Remote Sensing Letters 12}, 1 (2014),
  199--203.

\bibitem{dreisigmeyer2018direct}
{\sc Dreisigmeyer, D.~W.}
\newblock Direct search methods on reductive homogeneous spaces.
\newblock {\em Journal of Optimization Theory and Applications 176}, 3 (2018),
  585--604.

\bibitem{feppon2019extrinsic}
{\sc Feppon, F., and Lermusiaux, P.~F.}
\newblock The extrinsic geometry of dynamical systems tracking nonlinear matrix
  projections.
\newblock {\em SIAM Journal on Matrix Analysis and Applications 40}, 2 (2019),
  814--844.

\bibitem{fletcher2007riemannian}
{\sc Fletcher, P.~T., and Joshi, S.}
\newblock Riemannian geometry for the statistical analysis of diffusion tensor
  data.
\newblock {\em Signal Processing 87}, 2 (2007), 250--262.

\bibitem{fong2019stochastic}
{\sc Fong, R.~S., and Tino, P.}
\newblock Stochastic derivative-free optimization on {R}iemannian manifolds.
\newblock {\em arXiv preprint arXiv:1908.06783\/} (2019).

\bibitem{gao2018online}
{\sc Gao, X., Li, X., and Zhang, S.}
\newblock Online learning with non-convex losses and non-stationary regret.
\newblock In {\em International Conference on Artificial Intelligence and
  Statistics\/} (2018), PMLR, pp.~235--243.

\bibitem{grigor2009volume}
{\sc Grigor'yan, A., and Hsu, E.}
\newblock Volume growth and escape rate of {B}rownian motion on a
  {C}artan-{H}adamard manifold.
\newblock In {\em Sobolev Spaces in Mathematics II}. Springer, 2009,
  pp.~209--225.

\bibitem{hall2015online}
{\sc Hall, E.~C., and Willett, R.~M.}
\newblock Online convex optimization in dynamic environments.
\newblock {\em IEEE Journal of Selected Topics in Signal Processing 9}, 4
  (2015), 647--662.

\bibitem{hazan2019introduction}
{\sc Hazan, E.}
\newblock Introduction to online convex optimization.
\newblock {\em arXiv preprint arXiv:1909.05207\/} (2019).

\bibitem{hazan2020faster}
{\sc Hazan, E., and Minasyan, E.}
\newblock Faster projection-free online learning.
\newblock In {\em Conference on Learning Theory\/} (2020), PMLR,
  pp.~1877--1893.

\bibitem{hsu2002stochastic}
{\sc Hsu, E.~P.}
\newblock {\em Stochastic analysis on manifolds}, vol.~38.
\newblock American Mathematical Soc., 2002.

\bibitem{ira2020tuning}
{\sc Ira, A.~S., Manzie, C., Shames, I., Chin, R., Ne{\v{s}}i{\'c}, D., Nakada,
  H., and Sano, T.}
\newblock Tuning of multivariable model predictive controllers through expert
  bandit feedback.
\newblock {\em International Journal of Control\/} (2020), 1--9.

\bibitem{kasai2018riemannian}
{\sc Kasai, H., Sato, H., and Mishra, B.}
\newblock {R}iemannian stochastic recursive gradient algorithm.
\newblock In {\em International Conference on Machine Learning\/} (2018),
  pp.~2516--2524.

\bibitem{kovnatsky2016madmm}
{\sc Kovnatsky, A., Glashoff, K., and Bronstein, M.~M.}
\newblock {MADMM}: a generic algorithm for non-smooth optimization on
  manifolds.
\newblock In {\em European Conference on Computer Vision\/} (2016), Springer,
  pp.~680--696.

\bibitem{kurtek2011signal}
{\sc Kurtek, S.~A., Srivastava, A., and Wu, W.}
\newblock Signal estimation under random time-warpings and nonlinear signal
  alignment.
\newblock In {\em Advances in Neural Information Processing Systems\/} (2011),
  pp.~675--683.

\bibitem{li2020stochastic}
{\sc Li, J., Balasubramanian, K., and Ma, S.}
\newblock Stochastic zeroth-order {R}iemannian derivative estimation and
  optimization.
\newblock {\em arXiv preprint arXiv:2003.11238\/} (2020).

\bibitem{li2019weakly}
{\sc Li, X., Chen, S., Deng, Z., Qu, Q., Zhu, Z., and So, A. M.~C.}
\newblock Weakly convex optimization over {S}tiefel manifold using {R}iemannian
  subgradient-type methods.
\newblock {\em arXiv\/} (2019), arXiv--1911.

\bibitem{malik2019derivative}
{\sc Malik, D., Pananjady, A., Bhatia, K., Khamaru, K., Bartlett, P., and
  Wainwright, M.}
\newblock Derivative-free methods for policy optimization: Guarantees for
  linear quadratic systems.
\newblock In {\em The 22nd International Conference on Artificial Intelligence
  and Statistics\/} (2019), PMLR, pp.~2916--2925.

\bibitem{mania2018simple}
{\sc Mania, H., Guy, A., and Recht, B.}
\newblock Simple random search of static linear policies is competitive for
  reinforcement learning.
\newblock In {\em Advances in Neural Information Processing Systems\/} (2018),
  pp.~1800--1809.

\bibitem{manton2002optimization}
{\sc Manton, J.~H.}
\newblock Optimization algorithms exploiting unitary constraints.
\newblock {\em IEEE Transactions on Signal Processing 50}, 3 (2002), 635--650.

\bibitem{mokhtari2016online}
{\sc Mokhtari, A., Shahrampour, S., Jadbabaie, A., and Ribeiro, A.}
\newblock Online optimization in dynamic environments: Improved regret rates
  for strongly convex problems.
\newblock In {\em 2016 IEEE 55th Conference on Decision and Control (CDC)\/}
  (2016), IEEE, pp.~7195--7201.

\bibitem{nesspo17}
{\sc Nesterov, Y., and Spokoiny, V.}
\newblock Random gradient-free minimization of convex functions.
\newblock {\em Foundations of Computational Mathematics 17}, 2 (2017),
  527--566.

\bibitem{nielsen2010hyperbolic}
{\sc Nielsen, F., and Nock, R.}
\newblock Hyperbolic voronoi diagrams made easy.
\newblock In {\em 2010 International Conference on Computational Science and
  Its Applications\/} (2010), IEEE, pp.~74--80.

\bibitem{onishi2002voronoi}
{\sc Onishi, K., and Itoh, J.-i.}
\newblock Voronoi diagram in simply connected complete manifold.
\newblock {\em IEICE Transactions on Fundamentals of Electronics,
  Communications and Computer Sciences 85}, 5 (2002), 944--948.

\bibitem{pennec2006riemannian}
{\sc Pennec, X., Fillard, P., and Ayache, N.}
\newblock A {R}iemannian framework for tensor computing.
\newblock {\em International Journal of computer vision 66}, 1 (2006), 41--66.

\bibitem{powell1964efficient}
{\sc Powell, M.~J.}
\newblock An efficient method for finding the minimum of a function of several
  variables without calculating derivatives.
\newblock {\em The computer journal 7}, 2 (1964), 155--162.

\bibitem{rathi2007segmenting}
{\sc Rathi, Y., Tannenbaum, A., and Michailovich, O.}
\newblock Segmenting images on the tensor manifold.
\newblock In {\em 2007 IEEE Conference on Computer Vision and Pattern
  Recognition\/} (2007), IEEE, pp.~1--8.

\bibitem{said2017riemannian}
{\sc Said, S., Bombrun, L., Berthoumieu, Y., and Manton, J.~H.}
\newblock Riemannian gaussian distributions on the space of symmetric positive
  definite matrices.
\newblock {\em IEEE Transactions on Information Theory 63}, 4 (2017),
  2153--2170.

\bibitem{said2021bayesian}
{\sc Said, S., Le~Bihan, N., and Manton, J.~H.}
\newblock From {B}ayesian inference to {MCMC} and convex optimisation in
  {H}adamard manifolds.
\newblock In {\em International Conference on Geometric Science of
  Information\/} (2021), Springer, pp.~3--11.

\bibitem{sato2019riemannian}
{\sc Sato, H., and Sato, K.}
\newblock Riemannian gradient-based online identification method for linear
  systems with symmetric positive-definite matrix.
\newblock In {\em 2019 IEEE 58th Conference on Decision and Control (CDC)\/}
  (2019), IEEE, pp.~3593--3598.

\bibitem{seguin2021continuation}
{\sc S{\'e}guin, A., and Kressner, D.}
\newblock Continuation methods for riemannian optimization with an application
  to low-rank matrix completion.
\newblock {\em arXiv preprint arXiv:2106.08839\/} (2021).

\bibitem{shalev2011online}
{\sc Shalev-Shwartz, S., et~al.}
\newblock Online learning and online convex optimization.
\newblock {\em Foundations and trends in Machine Learning 4}, 2 (2011),
  107--194.

\bibitem{shames2019online}
{\sc Shames, I., Selvaratnam, D., and Manton, J.~H.}
\newblock Online optimization using zeroth order oracles.
\newblock {\em IEEE Control Systems Letters 4}, 1 (2019), 31--36.

\bibitem{simonetto2020time}
{\sc Simonetto, A., Dall'Anese, E., Paternain, S., Leus, G., and Giannakis,
  G.~B.}
\newblock Time-varying convex optimization: Time-structured algorithms and
  applications.
\newblock {\em Proceedings of the IEEE\/} (2020).

\bibitem{sun2016complete}
{\sc Sun, J., Qu, Q., and Wright, J.}
\newblock Complete dictionary recovery over the sphere {II}: Recovery by
  {R}iemannian trust-region method.
\newblock {\em IEEE Transactions on Information Theory 63}, 2 (2016), 885--914.

\bibitem{tuzel2008pedestrian}
{\sc Tuzel, O., Porikli, F., and Meer, P.}
\newblock Pedestrian detection via classification on riemannian manifolds.
\newblock {\em IEEE transactions on pattern analysis and machine intelligence
  30}, 10 (2008), 1713--1727.

\bibitem{udriste2013convex}
{\sc Udriste, C.}
\newblock {\em Convex functions and optimization methods on Riemannian
  manifolds}, vol.~297.
\newblock Springer Science \& Business Media, 2013.

\bibitem{vandereycken2013low}
{\sc Vandereycken, B.}
\newblock Low-rank matrix completion by {R}iemannian optimization.
\newblock {\em SIAM Journal on Optimization 23}, 2 (2013), 1214--1236.

\bibitem{vaswani2018robust}
{\sc Vaswani, N., Bouwmans, T., Javed, S., and Narayanamurthy, P.}
\newblock Robust subspace learning: Robust {PCA}, robust subspace tracking, and
  robust subspace recovery.
\newblock {\em IEEE signal processing magazine 35}, 4 (2018), 32--55.

\bibitem{wang2020riemannian}
{\sc Wang, B., Ma, S., and Xue, L.}
\newblock {R}iemannian stochastic proximal gradient methods for nonsmooth
  optimization over the {S}tiefel manifold.
\newblock {\em arXiv preprint arXiv:2005.01209\/} (2020).

\bibitem{weber2019projection}
{\sc Weber, M., and Sra, S.}
\newblock Projection-free nonconvex stochastic optimization on {R}iemannian
  manifolds.
\newblock {\em arXiv preprint arXiv:1910.04194\/} (2019).

\bibitem{yang2016tracking}
{\sc Yang, T., Zhang, L., Jin, R., and Yi, J.}
\newblock Tracking slowly moving clairvoyant: Optimal dynamic regret of online
  learning with true and noisy gradient.
\newblock In {\em International Conference on Machine Learning\/} (2016),
  pp.~449--457.

\bibitem{zhang2016riemannian}
{\sc Zhang, H., Reddi, S.~J., and Sra, S.}
\newblock {R}iemannian {SVRG}: Fast stochastic optimization on {R}iemannian
  manifolds.
\newblock In {\em Advances in Neural Information Processing Systems\/} (2016),
  pp.~4592--4600.

\bibitem{zhang2016first}
{\sc Zhang, H., and Sra, S.}
\newblock First-order methods for geodesically convex optimization.
\newblock In {\em Conference on Learning Theory\/} (2016), pp.~1617--1638.

\bibitem{zhang2018dynamic}
{\sc Zhang, L., Yang, T., Zhou, Z.-H., et~al.}
\newblock Dynamic regret of strongly adaptive methods.
\newblock In {\em International conference on machine learning\/} (2018), PMLR,
  pp.~5882--5891.

\bibitem{zhou2019faster}
{\sc Zhou, P., Yuan, X., Yan, S., and Feng, J.}
\newblock Faster first-order methods for stochastic non-convex optimization on
  {R}iemannian manifolds.
\newblock {\em IEEE transactions on pattern analysis and machine
  intelligence\/} (2019).

\end{thebibliography}
\end{document}